\definecolor{MyDarkBlue}{rgb}{0,0.08,0.50}
\definecolor{BrickRed}{rgb}{0.65,0.08,0}
\definecolor{darkgreen}{rgb}{0,.4,0}
\definecolor{darkagenta}{rgb}{.5,0,.5}
\definecolor{darkred}{rgb}{1,0,0}
\definecolor{darkblue}{rgb}{0,0,.4}
\newtheorem{Lemma}{Lemma}[section]
\newtheorem{Proposition}[Lemma]{Proposition}
\newtheorem{Theorem}[Lemma]{Theorem}
\newtheorem{Corollary}[Lemma]{Corollary}
\def\1{{\mathchoice {1\mskip-4mu\mathrm l}      
{1\mskip-4mu\mathrm l}
{1\mskip-4.5mu\mathrm l} {1\mskip-5mu\mathrm l}}}
\newcommand{\indic}[1]{\1_{\{#1\}}}
\newcommand{\prob}{\mathbb{P}}
\newcommand{\E}{\mathbb{E}}
\newcommand{\eps}{\varepsilon}
\newcommand{\Rbold}{{\mathbb{R}}}
\newcommand{\Zbold}{{\mathbb{Z}}}
\newcommand{\expec}{\mathbb{E}}
\newcommand{\eqn}[1]{\begin{equation} #1 \end{equation}}
\newcommand{\eqan}[1]{\begin{align} #1 \end{align}}
\newcommand{\lbeq}[1]{\label{#1}}
\newcommand{\refeq}[1]{(\ref{#1})}
\newcommand{\sss}{\scriptscriptstyle}
\newcommand {\convp}{\stackrel{\sss {\mathbb P}}{\longrightarrow}}
\newcommand {\vep}{\varepsilon}
\newcommand{\conn}{\longleftrightarrow}
\newcommand{\nn}{\nonumber}
\numberwithin{equation}{section}
\newcommand{\e}{{\rm e}}
\begin{document}
\author{Maria Deijfen \thanks{Department of Mathematics,
Stockholm University, 106 91 Stockholm, Sweden.
Email: {\tt mia@math.se}}
\and
Remco van der Hofstad
\thanks{Department of Mathematics and
Computer Science, Eindhoven University of Technology, P.O.\ Box 513,
5600 MB Eindhoven, The Netherlands. Email: {\tt
rhofstad@win.tue.nl}}
\and
Gerard Hooghiemstra
\thanks{DIAM, Delft University of Technology, Mekelweg 4, 2628CD Delft, The
Netherlands. Email: {\tt g.hooghiemstra@tudelft.nl} }
}

\title{Scale-free percolation}

\maketitle

\begin{abstract}

\noindent We formulate and study a model for inhomogeneous long-range percolation on $\Zbold^d$.
Each vertex $x\in\Zbold^d$ is assigned a non-negative weight $W_x$, where $(W_x)_{x\in\Zbold^d}$ are i.i.d.\ random variables. Conditionally on the weights, and given two parameters $\alpha,\lambda>0$,
the edges are independent and the probability that there is an edge between $x$ and $y$ is given by $p_{xy}=1-\exp\{-\lambda W_xW_y/|x-y|^\alpha\}$. The parameter $\lambda$ is the percolation parameter, while $\alpha$ describes the long-range nature of the model. We focus on the degree distribution in the resulting graph, on whether there exists an infinite component and on graph distance between remote pairs of vertices.

First, we show that the tail behavior of the degree distribution is related to the tail behavior of the weight distribution. When the tail of the distribution of $W_x$ is regularly varying with exponent $\tau-1$, then the tail of the degree distribution is regularly varying with exponent $\gamma=\alpha(\tau-1)/d$. The parameter $\gamma$ turns out to be crucial for the behavior of the model. Conditions on the weight distribution and $\gamma$ are formulated for the existence of a critical value $\lambda_c\in(0,\infty)$ such that the graph contains an infinite component when $\lambda>\lambda_c$ and no infinite component when $\lambda<\lambda_c$. Furthermore, a phase transition is established for the graph distances between vertices in the infinite component at the point $\gamma=2$, that is, at the point where the degrees switch from having finite to infinite second moment.

The model can be viewed as an interpolation between long-range percolation and models for inhomogeneous random graphs, and we show that the behavior shares the interesting features of both these models.

\vspace{0.5cm}

\noindent \emph{Keywords:} Random graphs, long-range percolation,
percolation in random environment, degree distribution, phase transition,
chemical distance, graph distance.

\vspace{0.5cm}

\noindent AMS 2000 Subject Classification: 60K35, 05C80.
\end{abstract}

\section{Introduction}
\label{sec-intro}

The field of percolation has been very active the last few decades with important progress on questions concerning for instance the appearance and uniqueness of an infinite component and the decay of connectivity functions. In parallel, the area of random graphs has developed from dealing mainly with simple models with little structure to studying more complex models aimed at describing real-world networks. A particular class of graph models that has received substantial attention consists of \emph{inhomogeneous random graphs}, where the edge probabilities are defined in terms of weights that are associated to the vertices.
In the current paper, we combine the above two fields by introducing a model for spatial inhomogeneous random graphs on $\Zbold^d$ with long-range edges and vertex weights. We characterize the degree structure in the graph, determine when there is a non-trivial percolation threshold and prove a phase transition for the graph distance at the point where the variance of the degrees goes from being finite to infinite. Such a phase transition has already been established for several non-spatial models, and the fact that it appears also in the presence of spatial influence gives further support to the belief that it is a universal feature.

We define our model on the lattice $\Zbold^d$, where the integer $d\ge 1$ denotes the dimension.
Let each vertex $x\in\Zbold^d$ be equipped with a weight $W_x$, where $(W_x)_{x\in \Zbold^d}$ are
independent and identically distributed (i.i.d.). Conditionally on the weights
$(W_x)_{x\in \Zbold^d}$, the edges in the graph are independent and the
probability that there is an edge between $x$ and $y$ is defined by
    \eqn{
    \label{edgeprob}
    p_{xy}=1-\e^{-\lambda W_xW_y/|x-y|^{\alpha}},
    }
for $\alpha,\lambda\in(0,\infty)$. We say that the edge $(x,y)$ is \emph{occupied} with probability $p_{xy}$ and \emph{vacant} otherwise. The parameter $\alpha>0$ describes the long-range nature of our model, while we think of $\lambda>0$ as a percolation parameter. Naturally, the model
for fixed $\lambda>0$ and weights $(W_x)_{x\in \Zbold^d}$ is the same as the one for $\lambda=1$ and $(\sqrt{\lambda} W_x)_{x\in \Zbold^d}$, so there might appear to be some redundancy
in the parameters of the model. However, we view the the weights $(W_x)_{x\in \Zbold^d}$ as
creating a \emph{random environment} in which we study the percolative properties of the model.
Thus, we think of the random variables $(W_x)_{x\in \Zbold^d}$ as fixed once and for all
and we change the percolation configuration by varying $\lambda$. We can thus view our model
as percolation in a random environment given by the weights $(W_x)_{x\in \Zbold^d}$.

\paragraph{The choice of weight variables.}
The distribution of the weight variables $(W_x)_{x\in \Zbold^d}$ is clearly very important
for the properties of the model. When the weight variables have unbounded support,
vertices with very high vertex weight will be present. These vertices play a special
role, as they are much more likely to have a large number of edges emerging from them,
that is, vertices with high weight tend to have high degrees. In many real-world networks,
such vertices with high degrees are present. These form the \emph{hubs} of the network and often play a crucial role in the functionality of the network. Therefore, we are particularly
interested in settings where the weights are heavy tailed.

Our model has close links both to long-range percolation
(arising when $W_x\equiv 1$ for every $x\in \Zbold^d$) and to inhomogeneous random graphs
(arising when we consider the model on
a fixed number of vertices $\{1,\ldots, n\}$ and when $|x-y|^{\alpha}$ in \eqref{edgeprob}
is replaced by a simple factor $n$). We next discuss these models in more detail.

\paragraph{Long-range percolation.}
In long-range percolation, in the most common setup, two vertices $x,y\in\Zbold^d$ are connected by an edge with a probability that decays like $\lambda|x-y|^{-\alpha}$, for some parameters $\alpha,\lambda>0$, as $|x-y|\to\infty$, and the occupation statuses of different edges are independent random variables. In $d=1$, the percolation properties of the model depend on the value of $\alpha$: If $\alpha<1$, the graph is almost surely connected \cite{Sch}, if $\alpha\in(1,2)$, the graph contains an infinite component as soon as the nearest-neighbor edge probability is large enough \cite{NewSch} and, if $\alpha>2$, the graph contains only finite components. For $\alpha=2$, the behavior is the same as for $\alpha\in(1,2)$ when $\lambda>1$ while there are only finite components when $\lambda<1$; see \cite{AN}. Uniqueness of an infinite component in any $d\geq 1$ follows from the main result in \cite{GKN}. In $d\geq 2$, there is a non-trivial critical value, since already the nearest-neighbor connections are sufficient for the possibility of an infinite component. The attention there focuses on the effect of the long-range connections on the critical behavior and on the properties of the infinite component.

As for the graph distance in long-range percolation, Benjamini et al.\ \cite{BKPS} show that $d(0,x)$ is
bounded as $|x|\to\infty$ when $\alpha<d$ and Coppersmith et al.\ \cite{CGS} show for a version of the model where all nearest-neighbor connections are present that $d(0,x)$ grows like $\log|x|/\log\log|x|$ when $\alpha=d$. Furthermore, conditionally on that $0$ and $x$ are in the infinite component, Biskup \cite{Biskup} shows that $d(0,x)$ grows like $(\log|x|)^\Delta$ for an explicit $\Delta>1$ when $\alpha\in(d,2d)$ and Berger \cite{Berger} shows that it grows at least like $|x|$ when $\alpha>2d$. We mention also the model by Yukich \cite{Yukich07}, where each point $x\in\Zbold^d$ is assigned an i.i.d.\ weight $U_x^{-p}$, with $U_x$ uniformly distributed on $[0,1]$ and $p\in(1/d,\infty)$, and two points $x$ and $y$ are then connected if and only if $|x-y|\leq \min\{U_x^{-p},U_y^{-p}\}$. Since $U_x^{-p}\geq 1$, the graph is connected and Yukich shows that $d(0,x)$ grows at most like $\log\log|x|$ as $|x|\to\infty$.
The model is related to the Poisson Boolean model on $\Rbold^d$; see Section 6 for details.

\paragraph{Inhomogeneous random graphs.}
In inhomogeneous random graphs, the edges are conditionally independent, given some vertex weights. One example is the Poissonian random graph \cite{NorrReit}, where each vertex $i$ in a set of $n$ vertices is assigned a random weight $W_i$ and two vertices $i$ and $j$ are then connected by an edge if a Poisson variable with mean $W_iW_j/\sum_{k=1}^n W_k$ takes on a positive value. We mention also the expected degree model by Chung and Lu \cite{CL:1,CL:2} and the related generalized random graph \cite{BDM-L}, which are in the same universality class as the Poissonian random graph model. The asymptotic degree distributions in these graphs are determined by the distribution of the weights in that, if the weight distribution is regularly varying, then the degree distribution varies regularly with the \emph{same} exponent. As for the graph distance, the above models have all been proved to have a phase transition at the point where the degrees go from having finite to infinite second moment: The distances grow logarithmically when the degrees have finite second moment, and doubly logarithmically when the second moment is infinite. This has also been established for the well-known configuration model \cite{HHM,HHZ} and for preferential attachment models \cite{DomHofHoo10}. It is believed to be true for a large class of random graph models. Finally, we mention that many of the above models are special cases of the very general model treated in the seminal paper by Bollob\'as et al.\ \cite{BRJ}.

Our model \emph{interpolates} between long-range percolation and inhomogeneous random graphs in that it has both \emph{geometry} in a similar way as in long-range percolation, as well as \emph{vertex weights}, in a similar way as for certain inhomogeneous random graphs. The main message of this paper is that our model
inherits the interesting features of both models it interpolates between.

\paragraph{Organization and results.} This paper is organized as follows. In Section \ref{sec-vert}, we characterize the tail behavior of the degree distribution. Take the weight distribution to be regularly varying with exponent $1-\tau$, that is, $\prob(W>w)=w^{-(\tau-1)}L(w)$, where $w\mapsto L(w)$ is slowly varying at infinity. We show that the corresponding degree distribution is then regularly varying with exponent $-\gamma$, where $\gamma=\alpha(\tau-1)/d$, provided that $\alpha>d$ and $\gamma>1$. Note that, when $\gamma>2$, the degrees have finite variance, while when $\gamma\in (1,2]$, the degrees have finite mean, but infinite variance. Whether the degrees have infinite variance for $\gamma=2$ depends on the precise shape of the slowly varying function involved. When $\alpha\leq d$ or $\gamma\leq 1$, it is not hard to see that the model is degenerate in the sense that all vertices will have infinite degree almost surely; see Theorem \ref{thm:div_deg}.

In Sections \ref{sec-perc} and \ref{sec-pos}, the percolation theoretical properties of the model
are studied. To this end, we assume that $\expec[W]=1$ as soon as the mean weight is finite and view $\lambda>0$ as the percolation parameter. The critical value is denoted $\lambda_c$. In Section \ref{sec-perc}, conditions are formulated on the degree distribution that guarantee that $\lambda_c<\infty$
and, in Section \ref{sec-pos}, it is shown that $\lambda_c>0$ if and only if the degrees have finite variance.

Section \ref{sec-dist} investigates graph distances between vertices. Let $d(x,y)$ denote the graph distance between $x$ and $y$, that is, the minimal number of occupied edges that form a path between $x$ and $y$. When there is an infinite component in the graph and 0 and $x$ are both in this component, how does $d(0,x)$ grow with $|x|$? We show that $d(0,x)$ is at least of the order $\log|x|$ when $\gamma>2$, that is, when the degrees have finite variance, and exactly of the order $\log\log|x|$ when $\gamma<2$, that is, when the degrees have infinite variance. This establishes a phase transition at the point where $\gamma=2$. We improve the lower bound on the distances for $\gamma>2$ in the case where $\alpha>2d$ to $|x|^{\eps}$ for some $\eps>0$, which mimics the results for long-range percolation. Indeed, there the distances are polylogarithmic when $\alpha\in (d,2d)$ and polynomial when $\alpha>d$.

The present work gives rise to many interesting further
questions and, in Section \ref{sec-disc}, we give some suggestions.

\section{Vertex degrees}
\label{sec-vert}

Throughout the paper we assume that  the edge probabilities $(p_{xy})_{x,y\in {\mathbb Z}^d}$ are as in \refeq{edgeprob}, where the weights $(W_x)_{x\in {\mathbb Z}^d}$ are i.i.d. In this section we relate the tail behavior of the degree distribution in our model to that of the weight distribution. To this end, assume that the distribution $F$ of the weights $(W_x)_{x\in\Zbold^d}$ has a regularly varying tail with exponent $\tau-1$, that is, denoting by $W$ a random variable with the same distribution as $W_0$ and by $F$ its distribution function, we assume that
    \eqn{
    \label{weight-distr}
    1-F(w)=\prob(W>w)=w^{-(\tau-1)}L(w),
    }
where $w\mapsto L(w)$ is a function that varies slowly at infinity. Write $D_x$ for the degree of $x\in\mathbb{Z}^d$ and note that, by translation invariance, $D_x$ has the same distribution as $D_0$.

In this section, we prove two main results. Firstly, we show in Theorem \ref{thm:div_deg}
that, as soon as the weight of a vertex is positive, its degree is almost surely infinite
when $\alpha\leq d$ or when both $\alpha>d$ and $\gamma=\alpha(\tau-1)/d\leq 1$. Secondly, in Theorem \ref{th:deg}, we show that the degrees have a power-law distribution with exponent $\gamma=\alpha(\tau-1)/d$ when $\alpha>d$ and $\gamma>1$.

\begin{Theorem}[Infinite degrees for $\alpha\leq d$ or $\gamma\leq 1$]
\label{thm:div_deg}
Fix $d\geq 1$. Then, $\prob(D_0=\infty|W_0>0)=1$ when
$\alpha\leq d$, or when $\alpha>d$ and the weight distribution satisfies
    \eqn{
    \lbeq{power-law-lbd}
    1-F(w)\geq c w^{-(\tau-1)}, \qquad w\ge 0,
    }
for some $c>0$ and $\tau>1$ such that $\gamma=\alpha(\tau-1)/d\leq 1$.
\end{Theorem}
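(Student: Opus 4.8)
The plan is to fix $w_0>0$, show that $\prob(D_0=\infty\mid W_0=w_0)=1$, and then integrate over $w_0>0$ to obtain the assertion. Write $D_0=\sum_{x\neq 0}\indic{0\sim x}$, where $\{0\sim x\}$ is the event that the edge $(0,x)$ is occupied. The difficulty is that the edge indicators $\indic{0\sim x}$ are only \emph{conditionally} independent given the weights, so one cannot apply the second Borel--Cantelli lemma to them directly. Instead, for each $x\neq 0$ choose a threshold $t_x\in(0,\infty)$ and consider the events $E_x=\{W_x>t_x\}\cap\{0\sim x\}$. Given all the weights, $\prob(0\sim x\mid\text{weights})=p_{0x}=1-\e^{-\lambda w_0W_x/|x|^{\alpha}}$ depends only on $W_x$ (when $W_0=w_0$), so for $x\neq y$
\[
\prob(E_x\cap E_y\mid W_0=w_0)=\E\big[\indic{W_x>t_x}\indic{W_y>t_y}\,p_{0x}p_{0y}\big]=\prob(E_x\mid W_0=w_0)\,\prob(E_y\mid W_0=w_0),
\]
because $p_{0x}p_{0y}$ is a product of functions of the independent variables $W_x,W_y$; thus $(E_x)_{x\neq 0}$ is an independent family under $\prob(\cdot\mid W_0=w_0)$. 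Since $E_x\subseteq\{0\sim x\}$, if $\sum_{x\neq 0}\prob(E_x\mid W_0=w_0)=\infty$ then, by the second Borel--Cantelli lemma, infinitely many $E_x$ occur almost surely and hence $D_0=\infty$ almost surely.

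It remains to pick the thresholds so that the series diverges in each regime. Using $1-\e^{-s}\ge(1-\e^{-1})\min(1,s)$ for $s\ge0$, one has, for any $t_x$,
\[
\prob(E_x\mid W_0=w_0)=\E\big[\indic{W_x>t_x}\big(1-\e^{-\lambda w_0W_x/|x|^{\alpha}}\big)\big]\ \ge\ (1-\e^{-1})\,\E\big[\indic{W_x>t_x}\min(1,\lambda w_0W_x/|x|^{\alpha})\big].
\]
If $\alpha>d$ and $\gamma\le1$, take $t_x=|x|^{\alpha}/(\lambda w_0)$, so that on $\{W_x>t_x\}$ the minimum equals $1$; then \refeq{power-law-lbd} gives $\prob(E_x\mid W_0=w_0)\ge(1-\e^{-1})\,\prob(W>|x|^{\alpha}/(\lambda w_0))\ge c'\,|x|^{-\alpha(\tau-1)}$ for $|x|$ large, with $c'=c'(w_0,\lambda)>0$, and $\sum_{x\neq 0}|x|^{-\alpha(\tau-1)}=\infty$ exactly when $\alpha(\tau-1)\le d$, i.e.\ when $\gamma\le1$. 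If $\alpha\le d$, heavy tails are not needed: since $\prob(W>0)>0$, fix $\delta>0$ with $q:=\prob(W>\delta)>0$ and take $t_x\equiv\delta$; then, for $|x|$ large enough that $\lambda w_0\delta/|x|^{\alpha}\le1$, the displayed bound yields $\prob(E_x\mid W_0=w_0)\ge(1-\e^{-1})\,q\,\lambda w_0\delta\,|x|^{-\alpha}$, and $\sum_{x\neq 0}|x|^{-\alpha}=\infty$ whenever $\alpha\le d$. In both cases the series diverges, so $\prob(D_0=\infty\mid W_0=w_0)=1$ for every $w_0>0$, and therefore $\prob(D_0=\infty\mid W_0>0)=1$.

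The argument is short once the right independent family $(E_x)$ has been isolated, and I expect the only genuinely delicate point to be the bookkeeping around the two layers of randomness: verifying that the $E_x$ are truly independent under $\prob(\cdot\mid W_0=w_0)$, and that the almost-sure conclusion survives integration over $w_0$. The divergence of the lattice sums $\sum_{x\neq 0}|x|^{-\beta}$ for $\beta\le d$ is standard (there are $\Theta(r^{d-1})$ sites with $|x|\asymp r$), and the elementary inequality $1-\e^{-s}\ge(1-\e^{-1})\min(1,s)$ (equality at $s=0$ and $s=1$, with the left-hand side concave) supplies the only analytic input.
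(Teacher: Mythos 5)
Your proof is correct and follows essentially the same route as the paper's: condition on $W_0=w_0$, lower-bound the conditional edge probabilities via $1-\e^{-s}\geq c\min(1,s)$, show the resulting series over $x\neq 0$ diverges in each regime, and apply the second Borel--Cantelli lemma (note that the edge indicators at the origin are already mutually independent given $W_0$, since each depends only on $W_x$ and its own edge randomness, so your truncation events $E_x$ are a safe but unnecessary precaution). The only cosmetic difference is that you bound each summand by the tail probability $\prob\big(W>|x|^{\alpha}/(\lambda w_0)\big)$ at a chosen threshold, whereas the paper uses a truncated first moment $\E[W\mathbf 1_{\{W\le s\}}]\ge C's^{2-\tau}$; both yield the same divergent lattice sums.
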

\proof
Denote the minimum (respectively, maximum) of two real numbers $x$ and $y$ by $x\wedge y$ (respectively, $x\vee y$). For $x,y\in \Zbold^d$, recall that $(x,y)$ is occupied if the edge between $x$ and $y$ is present in the graph. Using the bound $1-\e^{-x}\geq (x\wedge 1)/2$, we get
\begin{equation}\label{sum_occ}
\sum_{y\neq 0}\prob((0,y)\mbox{ occupied}|W_0=w)\geq \frac{1}{2}\sum_{y\neq 0}
\E\left[\frac{\lambda wW_y}{|y|^\alpha}\wedge 1\right]\geq \frac{\lambda w}{2}\sum_{y\neq 0}
\frac{\E\left[W_y\mathbf{1}_{\{W_y\leq |y|^\alpha w^{-1}\}}\right]}{|y|^\alpha},
\end{equation}
where $\mathbf{1}_A$ denotes the indicator of the event $A$.
As for (a), just note that clearly $\E\left[W_y\mathbf{1}_{\{W_y\leq |y|^\alpha w^{-1}\}}\right]\to \expec[W]$ as $|y|\to\infty$, implying that we can bound
$$
\sum_{y\neq 0}\prob((0,y)\mbox{ occupied}|W_0=w)\geq Cw\sum_{y\neq 0}\frac{1}{|y|^\alpha}
$$
for some constant $C>0$. If $\alpha\leq d$, then the sum in the bound diverges and, since the edges of the origin are independent conditionally on $W_0$, it then follows from the Borel-Cantelli lemma that $\prob(D_0=\infty|W_0=w)=1$ for every $w>0$. This implies that $\prob(D_0=\infty|W_0>0)=1$.

As for (b), if the weight distribution satisfies \refeq{power-law-lbd} and $\gamma=\alpha(\tau-1)/d\leq 1$, then $\tau\in (1,2]$. Thus, $\expec[W_y]=\infty$ and we obtain that
$$
\E[W_y\mathbf{1}_{\{W_y\leq s\}}]\geq C's^{2-\tau}.
$$
Combining this bound with (\ref{sum_occ}) yields
$$
\sum_{y\neq 0}\prob((0,y)\mbox{ occupied}|W_0=w)\geq C''w^{\tau-1}\sum_{y\neq 0}\frac{1}{|y|^{\alpha(\tau-1)}}.
$$
By the argument above, we have $\prob(D_0=\infty|W_0>0)=1$ as soon as $\gamma=\alpha(\tau-1)/d\le 1$.
\qed

\begin{Theorem}[Power-law degrees for power-law weights]
\label{th:deg} Fix $d\geq 1$. Assume that the weight distribution satisfies \refeq{weight-distr} with
$\alpha>d$ and $\gamma=\alpha(\tau-1)/d>1$. Then,
there exists $s\mapsto \ell(s)$ which is slowly varying at infinity such that
\eqn{
\label{degree-tail}
\prob(D_0>s)=s^{-\gamma}\ell(s).
}
\end{Theorem}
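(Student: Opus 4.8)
The plan is to condition on the weight $W_0=w$ of the origin and to exploit the fact that, given $W_0=w$, the edges emanating from $0$ become independent. Indeed, the indicator that $(0,y)$ is occupied depends only on $W_y$ and on the edge's own (independent) randomness, so after integrating out $W_y$ the events $\{(0,y)\text{ occupied}\}_{y\neq 0}$ are, conditionally on $W_0=w$, independent Bernoulli variables with parameter $q_y(w)=\prob((0,y)\text{ occupied}\mid W_0=w)=g(\lambda w/|y|^{\alpha})$, where $g(t)=1-\E[\e^{-tW}]$. Hence, conditionally on $W_0=w$, the degree $D_0$ is a sum of independent (non-identically distributed) Bernoulli variables with mean $\mu(w):=\E[D_0\mid W_0=w]=\sum_{y\neq 0}g(\lambda w/|y|^{\alpha})$. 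The argument then rests on three ingredients: (i) the function $\mu$ is regularly varying at infinity with index $d/\alpha$; (ii) conditionally on $W_0=w$, $D_0$ concentrates around $\mu(w)$; and (iii) these combine to give $\prob(D_0>s)\sim\prob(\mu(W_0)>s)$, the right-hand side being regularly varying with the asserted index.

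For (i), I would first replace the lattice sum by the integral $\int_{\Rbold^d}g(\lambda w/|x|^{\alpha})\,dx$. This is legitimate up to lower-order error because, for large $w$, the radial profile $x\mapsto g(\lambda w/|x|^{\alpha})$ is monotone and varies on the diverging scale $|x|\sim(\lambda w)^{1/\alpha}$, so a sandwich between lattice-shifted integrals controls the Riemann-sum error, while the $O(1)$ contribution near the origin is negligible against the diverging total. Passing to polar coordinates and substituting $u=\lambda w/r^{\alpha}$ turns the integral into $\frac{c_d}{\alpha}(\lambda w)^{d/\alpha}\int_0^{\infty}g(u)\,u^{-d/\alpha-1}\,du$. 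The finiteness of this last integral is precisely where the hypotheses enter: at $u=\infty$ it converges since $g(u)\to\prob(W>0)$ and $d/\alpha>0$, whereas at $u=0$ one needs the small-argument behaviour of $g$. When $\E[W]<\infty$ (that is, $\tau>2$) one has $g(u)\sim u\,\E[W]$ and convergence forces $\alpha>d$; when $\tau\in(1,2]$ a Tauberian theorem gives $g(u)\sim \Gamma(2-\tau)\,u^{\tau-1}L(1/u)$, and convergence then forces $\alpha(\tau-1)>d$, i.e.\ $\gamma>1$. Carrying the slowly varying $L$ correctly through this computation is the main technical obstacle; the outcome is that $\mu$ is regularly varying of index $d/\alpha$, so that its generalized inverse $\mu^{-1}$ is regularly varying of index $\alpha/d$.

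For (ii), since $D_0$ is conditionally a sum of independent Bernoulli variables, $\Var(D_0\mid W_0=w)=\sum_{y\neq 0}q_y(w)(1-q_y(w))\le\mu(w)$. Chebyshev's inequality then gives, whenever $\mu(w)\ge s/(1-\eps)$, that $\prob(D_0>s\mid W_0=w)\ge 1-1/(\eps^2\mu(w))\to 1$ as $w\to\infty$, yielding the lower bound $\prob(D_0>s)\ge(1-o(1))\,\prob(W_0>\mu^{-1}(s/(1-\eps)))$. For the matching upper bound I split according to whether $\mu(W_0)$ exceeds $s/(1+\eps)$: on $\{\mu(W_0)>s/(1+\eps)\}$ the conditional probability is bounded by $1$, contributing $\prob(W_0>\mu^{-1}(s/(1+\eps)))$, whereas on the complement, where $\mu(w)\le s/(1+\eps)$, the Poissonian upper-tail (Chernoff) bound $\prob(D_0>s\mid W_0=w)\le \e^{-\mu(w)}(\e\mu(w)/s)^{s}$ decays exponentially in $s$ uniformly over that region, and so contributes a term negligible against the polynomially small main term.

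Finally, for (iii), I would combine the two bounds and let $\eps\downarrow0$. Writing $R(s)=\prob(W_0>\mu^{-1}(s))$, the composition of the regularly varying $\mu^{-1}$ (index $\alpha/d$) with the regularly varying tail $\prob(W_0>t)=t^{-(\tau-1)}L(t)$ (index $-(\tau-1)$) shows that $R$ is regularly varying of index $-(\tau-1)\cdot\alpha/d=-\gamma$. By the uniform convergence theorem for regularly varying functions, the ratios $R(s/(1\mp\eps))/R(s)$ converge as $s\to\infty$ to limits that tend to $1$ as $\eps\to0$; the sandwich from (ii) therefore forces $\prob(D_0>s)\sim R(s)$. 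Since $R$ is regularly varying of index $-\gamma$, it may be written as $s^{-\gamma}\ell(s)$ with $\ell$ slowly varying, and the asymptotic equivalence transfers this representation to $\prob(D_0>s)$, which is exactly \eqref{degree-tail}.
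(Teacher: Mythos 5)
Your proposal is correct and follows essentially the same route as the paper's proof: condition on $W_0=w$, show that $\E[D_0\mid W_0=w]$ grows like $\xi w^{d/\alpha}$ (the content of Proposition \ref{prop:exp_deg_bd}, which you re-derive via polar coordinates applied to $g(t)=1-\E[\e^{-tW}]$; note that no slowly varying function needs to be ``carried through'' here, since $\gamma>1$ forces $\E[W^{d/\alpha}]<\infty$ and the limit is a genuine constant), prove that the conditional degree concentrates around this mean, and transfer the regularly varying weight tail through the map $w\mapsto \xi w^{d/\alpha}$. The differences are confined to interchangeable technical ingredients: for weights whose conditional mean is at most $s/(1+\eps)$ you use the Poissonian Chernoff bound where the paper (adapting Yukich) uses Bernstein's inequality below a finer cutoff $m(s)$; in Chebyshev's inequality you use the bound $\Var(D_0\mid W_0=w)\le\E[D_0\mid W_0=w]$, valid for sums of independent indicators, where the paper computes the conditional variance asymptotics $\xi'w^{d/\alpha}+O(1)$; and you extract regular variation from a $(1\pm\eps)$-sandwich together with the uniform convergence theorem, letting $\eps\downarrow 0$, where the paper invokes Feller's criterion for monotone functions with ratio convergence on a dense set.
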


Under the assumptions of the theorem, the degrees have finite mean, that is, $\gamma=\alpha(\tau-1)/d>1$. Furthermore, it is easy to see that, for $\alpha>d$, finite variance for the weights (i.e., $\tau>3$) implies finite variance for the degrees (i.e., $\gamma>2$). Note however that the variance of the degrees may be finite even if the weights have infinite variance, since for a given value of $\tau\in(2,3)$ we have $\gamma>2$ if $\alpha$ is large enough.

In the remainder of this section, we prove Theorem \ref{th:deg}.
Write $v_d$ for the volume of the unit ball in $\Rbold^d$ and let $\Gamma(\cdot)$ denote the gamma function.
The proof of Theorem \ref{th:deg} relies on the following characterization of the conditional expected degree.

\begin{Proposition}[Asymptotic expected vertex degree]
\label{prop:exp_deg_bd}
Assume that the weight distribution satisfies (\ref{weight-distr}) with $\alpha>d$ and $\gamma>1$. Then
$$
|\expec[D_0|W_0=w]-\xi w^{d/\alpha}|\leq C,
$$
where  $\xi=\lambda^{d/\alpha}v_d\Gamma\left(1-\frac{d}{\alpha}\right)\E[W^{d/\alpha}]$ and $C=C(d)$ is a constant.
\end{Proposition}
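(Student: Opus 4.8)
The plan is to compute the conditional expected degree directly as a sum over lattice points. Conditionally on $W_0=w$, the degree $D_0$ is a sum of independent Bernoulli variables, so by monotone convergence
$$
\expec[D_0|W_0=w]=\sum_{y\neq 0}\expec\bigl[1-\e^{-\lambda w W_y/|y|^\alpha}\bigr]
=\sum_{y\neq 0}g_w(|y|^{-\alpha}),
$$
where $g_w(t):=\expec[1-\e^{-\lambda w W t}]$ and the expectation is over an independent copy $W$ of the weight. The first step is to understand $g_w$: since $1-\e^{-u}\le u\wedge 1$ and $1-\e^{-u}\ge \tfrac12(u\wedge 1)$, one has $g_w(t)\asymp \expec[(\lambda w W t)\wedge 1]$, and more precisely a standard Tauberian/integration-by-parts computation using $\gamma>1$ (so $\expec[W^{d/\alpha}]<\infty$, which needs checking from \eqref{weight-distr} and $\gamma=\alpha(\tau-1)/d>1$) gives the exact small-$t$ asymptotics
$$
g_w(t)=\lambda^{d/\alpha}\Gamma\!\left(1-\tfrac{d}{\alpha}\right)\expec[W^{d/\alpha}]\,(wt)^{d/\alpha}\bigl(1+o(1)\bigr)\qquad\text{as }t\downarrow 0,
$$
via the identity $1-\e^{-u}=\tfrac1{\Gamma(\beta)}\int_0^\infty (1-\e^{-u/r})\,r^{\beta-1}\e^{-\cdots}$ type manipulation, or more simply by writing $1-\e^{-\lambda wWt}=\int_0^{\lambda wWt}\e^{-s}\,ds$ and $\expec[W\cdot(\text{truncation})]$ — the cleanest route is $\expec[1-\e^{-aW}]=a\int_0^\infty \e^{-as}\prob(W>s)\,ds$ combined with Karamata.

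The second step is to pass from the lattice sum to an integral. Write $\sum_{y\neq 0}g_w(|y|^{-\alpha})=\int_{\Rbold^d}g_w(|z|^{-\alpha})\,dz+(\text{error})$, where the error comes from replacing $g_w(|y|^{-\alpha})$ by its average over the unit cube centered at $y$. Because $z\mapsto g_w(|z|^{-\alpha})$ is bounded by $1$ near the origin and decays like $|z|^{-\alpha}$ (with $\alpha>d$) at infinity, both the lattice sum and the integral converge, and the cube-averaging error is bounded by a constant depending only on $d$ (uniformly in $w$) — this uses that $g_w\le 1$ and that the variation of $|z|^{-\alpha}$ over a unit cube at distance $R$ is $O(R^{-\alpha-1})$, which is summable. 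The integral itself is evaluated in polar coordinates: with $r=|z|$,
$$
\int_{\Rbold^d}g_w(|z|^{-\alpha})\,dz = d\,v_d\int_0^\infty g_w(r^{-\alpha})\,r^{d-1}\,dr
= d\,v_d\int_0^\infty \expec\bigl[1-\e^{-\lambda w W r^{-\alpha}}\bigr]r^{d-1}\,dr.
$$
Substituting $u=r^{-\alpha}$ (so $r^{d-1}\,dr=-\tfrac1\alpha u^{-d/\alpha-1}\,du$) and using Fubini gives $\tfrac{d v_d}{\alpha}\expec\bigl[\int_0^\infty(1-\e^{-\lambda wWu})u^{-d/\alpha-1}\,du\bigr]$, and the inner integral is a Gamma integral equal to $\Gamma(1-d/\alpha)(\lambda wW)^{d/\alpha}$ — here one needs $0<d/\alpha<1$, i.e.\ $\alpha>d$, for convergence at both ends. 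Since $\tfrac{d v_d}{\alpha}\Gamma(1-d/\alpha)=v_d\Gamma(2-d/\alpha)/(1)\cdot$ — more directly $\tfrac d\alpha\Gamma(1-\tfrac d\alpha)=\Gamma(1-\tfrac d\alpha)\tfrac d\alpha$ and one recognizes the constant $\xi=\lambda^{d/\alpha}v_d\Gamma(1-d/\alpha)\expec[W^{d/\alpha}]$ after pulling $w^{d/\alpha}$ out, giving exactly $\int_{\Rbold^d}g_w(|z|^{-\alpha})\,dz=\xi w^{d/\alpha}$.

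The main obstacle, and the only place real care is needed, is making the cube-averaging (lattice-to-integral) error genuinely $O(1)$ \emph{uniformly in $w$}, rather than $o(\xi w^{d/\alpha})$: near the origin both $g_w(|y|^{-\alpha})$ and its continuum counterpart are $O(1)$ on an $O(1)$ number of cubes, contributing $O(1)$; in the regime $|y|^{-\alpha}w$ small one uses the gradient bound $|\nabla_z g_w(|z|^{-\alpha})|\le C\lambda w\expec[W]\,|z|^{-\alpha-1}$ (from $|1-\e^{-u}|'\le 1$) so the per-cube discrepancy at radius $R$ is $O(w R^{-\alpha-1})$, summable to $O(w)$ — which is \emph{not} $O(1)$, so one must instead split at radius $R_\ast\sim w^{1/\alpha}$: for $|y|\le R_\ast$ bound everything by $1$ (that is $O(R_\ast^d)=O(w^{d/\alpha})$ terms, too many!) — so the honest argument compares $g_w(|y|^{-\alpha})$ to $g_w(|z|^{-\alpha})$ \emph{with the same} $g_w$ and exploits that $t\mapsto g_w(t)$ is concave and $\le 1$, giving a discrepancy bound $\min(1, C w|y|^{-\alpha-1})$ per cube whose sum over $\Zbold^d$ is dominated by $\int_{\Rbold^d}\min(1,Cw|z|^{-\alpha-1})\,dz=O\bigl((Cw)^{d/(\alpha+1)}\bigr)$, still $w$-dependent. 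The resolution in the paper is presumably a cleaner monotonicity comparison: bound $g_w(|y|^{-\alpha})$ between $\int$ over the cube of $g_w(|z|^{-\alpha})$ shifted inward/outward by the cube diameter, i.e.\ between $\int_{\Rbold^d}g_w((|z|+\sqrt d)^{-\alpha})\,dz$ and $\int_{\Rbold^d}g_w((|z|-\sqrt d)_+^{-\alpha})\,dz$, each of which equals $\xi(w)w^{d/\alpha}$-type expression with the radial shift producing only an $O(1)$ additive change (the origin ball of radius $\sqrt d$ contributes $O(1)$, and the tail correction $\int_{\sqrt d}^\infty |g_w((r-\sqrt d)^{-\alpha})-g_w((r+\sqrt d)^{-\alpha})|r^{d-1}\,dr$ is $O(1)$ because both factors are $\le\min(1,C\lambda w W r^{-\alpha})$ and $\alpha>d$). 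I would carry out this radial-sandwich argument explicitly as the core of the proof, and treat the Gamma-integral evaluation and the Karamata input for $\expec[W^{d/\alpha}]<\infty$ as routine.
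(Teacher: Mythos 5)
Your overall route is the same as the paper's: write $\expec[D_0|W_0=w]$ as a lattice sum of a bounded, radially decreasing function of $y$, replace the sum by an integral over $\Rbold^d$, and evaluate that integral via polar coordinates and a Gamma integral to obtain exactly $\xi w^{d/\alpha}$. Your Fubini-plus-$\Gamma(1-d/\alpha)$ computation of the integral is correct and equivalent to the paper's change of variables $y=(\lambda uw)^{1/\alpha}t$; the only structural difference is that the paper keeps the integration over the weight value $u$ on the outside and compares sum with integral for each fixed $u$ (bounding the two error terms $E_1(u)$ and $E_2(u)$ by $v_d$ and then integrating against $dF$), whereas you average over $W$ first and compare for $g_w$. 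You have also correctly identified the one delicate point: the lattice-versus-continuum discrepancy must be shown to be $O(1)$ \emph{uniformly in} $w$.

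The gap is that the resolution you settle on --- sandwiching the sum between $\int_{\Rbold^d} g_w((|z|+\sqrt d)^{-\alpha})\,dz$ and $\int_{\Rbold^d} g_w((|z|-\sqrt d)_+^{-\alpha})\,dz$ --- does not give $O(1)$ in dimension $d\ge 2$, for the same reason your earlier discarded attempts failed. Writing $\phi(s)=g_w(s^{-\alpha})$, the width of the sandwich is, in polar coordinates, $d\,v_d\int_0^\infty \phi(s)\big[(s+\sqrt d)^{d-1}-(s-\sqrt d)_+^{d-1}\big]\,ds+O(1)$; since $(s+c)^{d-1}-(s-c)^{d-1}\asymp c\,s^{d-2}$ for $s\gg c$ and $\phi(s)\asymp 1$ for $s\lesssim (\lambda w)^{1/\alpha}$, this is of order $w^{(d-1)/\alpha}$. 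Geometrically: shifting the radius by $O(1)$ moves an annulus of volume $\sim R^{d-1}$ at the scale $R\sim w^{1/\alpha}$ where the integrand is still of order one. Your stated justification (``both factors are $\le \min(1,C\lambda w W r^{-\alpha})$ and $\alpha>d$'') bounds the two terms separately and therefore only yields $O(w^{d/\alpha})$, i.e.\ the size of the main term, so it certifies nothing. As written, your argument proves $|\expec[D_0|W_0=w]-\xi w^{d/\alpha}|\le C(1+w^{(d-1)/\alpha})$, which is $o(w^{d/\alpha})$ but not the $O(1)$ claimed. That weaker estimate is in fact essentially all that the proof of Theorem \ref{th:deg} consumes, so you could honestly state and use the weaker form; to actually get $O(1)$ one needs a finer input than monotone sandwiching, e.g.\ a second-order Euler--Maclaurin or Poisson-summation argument exploiting that $z\mapsto 1-\e^{-a|z|^{-\alpha}}$ is smooth at its own scale $a^{1/\alpha}$, so that the first-order (gradient) contributions over each unit cube cancel. (The paper's own disposal of this point, the one-line assertion $0\le E_1(u)\le v_d$ ``by monotonicity,'' is comparably terse; but your write-up should not present the sandwich as closing the gap when your own preceding computations show that it cannot.)
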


We remark that if $\gamma>1$, then $\tau-1=\gamma d/\alpha>d/\alpha$, so that $\E[W^{d/\alpha}]<\infty$.

\proof
Observe that, given $W_0$, the degree $D_0$ is a sum of independent indicators and
$$
\expec[D_0|W_0=w]=\sum_{y\neq 0}
\Big(
1-\E\Big[\e^{-\lambda wW_y/|y|^{\alpha}}\Big]
\Big)
=
\sum_{y\neq 0}
\int_0^\infty
\Big(
1-\e^{-\lambda wu/|y|^{\alpha}}\Big)\, dF(u).
$$
We interchange the order of integration and summation and first compute
the sum over $y\neq 0$. To this end, write
\eqn
{
\sum_{y\neq 0}\Big(1-\e^{-\lambda wu |y|^{-\alpha}}\Big)=
\int_{|y|>1}\Big(1-\e^{-\lambda wu |y|^{-\alpha}}\Big)\,dy+ E_1(u),
}
where $E_1(u)$ is an error term that will be estimated below. A change of variables $y=(\lambda uw)^{1/\alpha}t$ yields that
\begin{eqnarray}
\int_{|y|>1}
(1-\e^{-\lambda wu |y|^{-\alpha}})\,dy&=&
(uw\lambda)^{d/\alpha} \int_{|t|>(uw\lambda)^{-1/\alpha}} (1-\e^{-|t|^{-\alpha}})\,dt\nonumber\\
&=&
    (uw\lambda)^{d/\alpha} \int_{|t|>0} (1-\e^{-|t|^{-\alpha}})\,dt-E_2(u),
    \label{int-ben}
\end{eqnarray}
where again $E_2(u)$ is an error term that will be dealt with below. Converting to polar coordinates followed by partial integration and finally a change of variables yields that
\begin{eqnarray*}
\int_{|t|>0} (1-\e^{-|t|^{-\alpha}})\,dt&=& v_d \int_{r=0}^\infty \Big(1-\e^{-r^{-\alpha}}\Big)\,d(r^d)
=v_d\int_{r=0}^\infty r^d \,d(\e^{-r^{-\alpha}})\\
&=&-v_d\int_{0}^\infty s^{-d/\alpha} \,d(\e^{-s})
=v_d\Gamma\Big(1-\frac{d}{\alpha}\Big),
\end{eqnarray*}
for $\alpha>d$. Hence, provided that $\expec[W^{d/\alpha}]<\infty$, which holds for $\gamma>1$, we obtain
\begin{eqnarray}
\expec[D_0|W_0=w]&=&
v_d\Gamma\Big(1-\frac{d}{\alpha}\Big)\int_0^\infty (uw\lambda)^{d/\alpha}\,dF(u)
+\int_0^\infty (E_1(u)-E_2(u))dF(u)\nonumber\\
&=&\xi w^{d/\alpha}+\int_0^\infty (E_1(u)-E_2(u))dF(u)\label{eq:exp_bd},
\end{eqnarray}
where $\xi=\lambda^{d/\alpha}v_d\Gamma\left(1-\frac{d}{\alpha}\right)\E[W^{d/\alpha}]$.

It remains to bound the error terms. As for $E_1(u)$, since $1-\e^{-c|y|^{-\alpha}}$ is monotonically decreasing as $|y|$ increases, we can estimate
$$
0\leq E_1(u)=\sum_{y\neq 0} \Big(1-\e^{-\lambda wu |y|^{-\alpha}}\Big) -\int_{|y|>1}
\Big(1-\e^{-\lambda wu |y|^{-\alpha}}\Big)\,dy \leq v_d.
$$
Moving on to $E_2(u)$, we have
$$
E_2(u)=(uw\lambda)^{d/\alpha} \int_{|t|\leq (uw\lambda)^{-1/\alpha}} (1-\e^{-|t|^{-\alpha}})\,dt,
$$
and a similar computation as the one following \eqref{int-ben} yields
$$
E_2(u)=v_d
\left\{
(1-\e^{-uw\lambda})+(uw\lambda)^{d/\alpha}\int_{uw\lambda}^\infty s^{-d/\alpha}\e^{-s}\,ds
\right\}.
$$
For $a\leq 1$ we have that
$$
\int_z^\infty s^{a-1}\e^{-s}\,ds \leq z^{a-1}\int_z^{\infty}\e^{-s}ds=z^{a-1}\e^{-z}.
$$
With $-d/\alpha=a-1$, it follows that
$$
(uw\lambda)^{d/\alpha}\int_{uw\lambda}^\infty s^{-d/\alpha}\e^{-s}\,ds\leq \e^{-uw\lambda},
$$
and hence $0\leq E_2(u)\leq v_d$. Combining these estimates with (\ref{eq:exp_bd}) yields
$$
|\expec[D_0|W_0=w]- \xi w^{d/\alpha}|\leq v_d.
$$
\qed

With Proposition \ref{prop:exp_deg_bd} at hand we proceed to prove Theorem \ref{th:deg}.

\begin{proof}[Proof of Theorem \ref{th:deg}]
We first give a heuristic argument. The tail of the degree distribution is obtained as
\eqn{
\label{survival}
\prob(D_0>s)
=\int \prob(D_0>s|W_0=w)\,dF(w).
}
It follows from Proposition \ref{prop:exp_deg_bd} that
\eqn{
\label{hypothesis}
\expec[D_0|W_0=w]=\xi w^{d/\alpha}+O(1),
}
as $w\to\infty$. Since $D_0|W_0=w$ is a sum of independent indicators, it is reasonable to expect that $\prob(D_0>s|W_0=w)$ is well approximated by the indicator function
$$
{\bf 1}_{\{\expec[D_0|W_0=w]>s\}}\approx {\bf 1}_{\{\xi w^{d/\alpha}>s\}}.
$$
This results in
$$
\prob(D_0>s)
\approx\int_{(s/\xi)^{\alpha/d}}^\infty \,dF(w)=
[1-F]((s/\xi)^{\alpha/d})=s^{-\alpha(\tau-1)/d}\ell(s),
$$
where $s\mapsto \ell(s)$ is slowly varying at infinity.

To formalize the above, we adapt the proof of \cite[Theorem 1.1]{Yukich07}. First, for fixed $s$, split the integral in \eqref{survival} into two parts:
\eqn{
\label{splitintotwo}
\int \prob(D_0>s|W_0=w)\,dF(w)
=
\int_{I_1} \prob(D_0>s|W_0=w)\,dF(w)+
\int_{I_2} \prob(D_0>s|W_0=w)\,dF(w),
}
where $I_1=[0,m(s))$ and $I_2=[m(s),\infty)$, and where
\eqn{
\label{waarde-m}
m(s)=\Big(
\frac{s-s^{1/2}\log s +O(1)}{\xi}
\Big)^{\alpha/d}.
}
As in \cite[Theorem 1.1]{Yukich07}, using Bernstein's inequality, one can show for every $a>0$ that
    \eqn{
    \label{as-prop}
    \lim_{s\to \infty} s^{a} \int_{I_1}\prob(D_0>s|W_0=w)\,dF(w)\le   \lim_{s\to \infty} s^{a} s^{-3\log s/10}=0.
    }
This shows that the integral over $I_1$ does not contribute to the possible regular variation of $\prob(D_0>s)$. In order to investigate the integral over $I_2$, let $Y_w$ denote a random variable with the same distribution as $D_0|W_0=w$, that is,
\eqn{
\label{emyy}
Y_w\stackrel{d}{=}(D_0|W_0=w).
}
The first moment of $Y_w$ is characterized in (\ref{hypothesis}) and a similar analysis as for the first moment yields
$$
\mbox{Var}(Y_w)=\xi'w^{d/\alpha}+O(1),
$$
where $\xi'<\xi$.

The proof of \eqref{degree-tail} is now completed in a slightly different way than in \cite{Yukich07}.
Define
\eqn{
\label{def-G}
G(t)=\int_{w>m(t)} \prob(Y_w>t)\,dF(w),
}
where the function $m$ is defined by \eqref{waarde-m}. Then, \eqref{as-prop} shows that $\prob(D_0>t)=G(t)+O(t^{-a})$ for any $a>0$. Clearly, $\prob(D_0>s)$ is a monotone function on $(0,\infty)$ and hence \eqref{degree-tail} follows if we show that
$$
\lim_{t\to \infty} \frac{\prob(D_0>st)}{\prob(D_0>t)}=s^{-\gamma},
$$
on a dense set $A\subset (0,\infty)$; see \cite[Section VIII.8]{Feller}. By \eqref{splitintotwo} and \eqref{as-prop} this in turn follows if we can deduce that
    $$
    \lim_{t\to \infty} \frac{G(st)}{G(t)}=s^{-\gamma},
    $$
for $s\in (0,\infty)$. To this end, we note that, clearly,
    $$
    G(t)\leq 1-F(m(t)).
    $$
Further, for each $\vep>0$, we have
    $$
    G(t)\geq \int_{w>(1+\vep)m(t)} \prob(Y_w>t)\,dF(w)
    =1-F\big((1+\vep)m(t)\big)+\int_{w>(1+\vep)m(t)} \prob(Y_w\leq t)\,dF(w),
    $$
and, by Chebyshev's inequality and the fact that $\expec[Y_w]>t(1+\vep/2)$ for $t>0$ sufficiently large, we obtain that
    $$
    \prob(Y_w\leq t)\leq \frac{\mbox{Var}(Y_w)}{(\expec[Y_w]-t)^2}\leq C/(t\vep)=o(1)
    $$
uniformly in $w>(1+\vep)m(t)$ as $t\to\infty$. Hence, since
    $$
    \lim_{t\to \infty}\frac{m(ts)}{m(t)}=(s/\xi)^{\alpha/d},
    $$
we arrive at
    $$
    \lim_{t\to \infty} \frac{G(ts)}{G(t)}=\lim_{t\to \infty} \frac{1-F(m(ts))}{1-F(m(t))}
    =\lim_{t\to \infty} \frac{1-F((st/\xi)^{\alpha/d})}{1-F((t/\xi)^{\alpha/d})}
    =s^{-\alpha(\tau-1)/d}.
    $$
\end{proof}

\section{Percolation -- finiteness of the critical value}
\label{sec-perc}
In the following sections, we investigate the percolation properties of our model.
We take $p_{xy}$ as in \eqref{edgeprob} where $\alpha> 0$ is fixed and view $\lambda>0$ as the percolation parameter. When the weights $(W_x)_{x\in\Zbold^d}$ have finite mean, we can,
without loss of generality, assume that they are normalized so that $\expec[W]=1$.

Denote the resulting random graph by $G(\lambda,\alpha)$ and write $x\conn y$ to denote
the event that there is a path of occupied edges between $x$ and $y$ in $G(\lambda,\alpha)$.
Denote by $\mathcal{C}(x)=\{y\colon x\conn y\}$ the \emph{component} of $x$, and
by $|\mathcal{C}(x)|$ the number of vertices in $\mathcal{C}(x)$. The \emph{percolation probability}
is defined as
    $$
    \theta(\lambda)=\prob(|\mathcal{C}(0)|=\infty),
    $$
and the critical percolation value $\lambda_c$ is defined as
    $$
    \lambda_c=\inf\{\lambda\colon \theta(\lambda)>0\}.
    $$
See \cite{BolRio06,Grim99} for general introductions to percolation.
It follows from the general uniqueness result in \cite{GKN} that $G(\lambda,\alpha)$
contains almost surely at most one infinite component. Under what conditions on $\alpha$ and on the
degree distribution is there a non-trivial phase transition in the sense that $\lambda_c\in(0,\infty)$?
Note to begin with that it follows from Proposition \ref{thm:div_deg} that $\lambda_c=0$ for
$\alpha\leq d$ or $\alpha>d$ and $\gamma\leq 1$, that is, when $\alpha\leq d$ or
$\gamma\leq 1$ the graph percolates for all $\lambda>0$.
Hence we shall henceforth restrict to the case $\alpha>d$ and $\gamma> 1$. The following theorem gives sufficient conditions for $\lambda_c<\infty$ so that the model percolates for large enough $\lambda$.

\begin{Theorem}[Finiteness of the critical value]\label{th:finite}
Assume that $\alpha>d$ and that $\gamma>1$.

\item[\rm{(a)}] If $\prob(W=0)<1$, then $\lambda_c<\infty$ in $d\geq 2$.

\item[\rm{(b)}] If $\alpha\in(1,2]$ and $\prob(W\geq w)=1$ for some $w>0$, then $\lambda_c<\infty$ in $d=1$.

\item[\rm{(c)}] If $\alpha>2$ and the weight distribution satisfies
    \eqn{
    \lbeq{power-law-ubd}
    1-F(w)\leq c w^{-(\tau-1)}, \qquad w\ge 0,
    }
for some $c>0$ and $\tau>1$ such that $\gamma=\alpha(\tau-1)/d>2$, then $\lambda_c=\infty$ in $d=1$.
\end{Theorem}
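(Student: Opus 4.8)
\emph{Overview.} Parts (a) and (b) assert that an infinite cluster appears once $\lambda$ is large, and I would prove both by dominating $G(\lambda,\alpha)$ \emph{from below} by a model already known to percolate; part (c) asserts the opposite, that no finite $\lambda$ produces an infinite cluster, and I would prove it by a cut-point argument. For \textbf{part (a)}, since $\prob(W=0)<1$ fix $\eps>0$ with $\rho:=\prob(W>\eps)>0$ and call a vertex \emph{heavy} if $W_x>\eps$. Partition $\Zbold^d$ into disjoint cubes $B_z=Lz+[0,L)^d$ of side $L$, and call a cube \emph{good} if it contains at least one heavy vertex; the events $\{B_z\text{ good}\}$ are i.i.d.\ with probability $1-(1-\rho)^{L^d}\to1$ as $L\to\infty$. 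In each good cube select one heavy vertex $v_z$ (smallest in lexicographic order), and declare a renormalised bond $\{z,z'\}$ between nearest-neighbour boxes \emph{open} when both cubes are good and the edge $\{v_z,v_{z'}\}$ is occupied. Since $|v_z-v_{z'}|\le c_dL$ and $W_{v_z},W_{v_{z'}}>\eps$, that edge is occupied with conditional probability at least $1-\e^{-\lambda\eps^2/(c_dL)^\alpha}$, and, given the weights, the connecting edges of distinct renormalised bonds are independent; the renormalised process is thus a finite-range dependent percolation whose marginal open probability exceeds $\big(1-(1-\rho)^{L^d}\big)^2\big(1-\e^{-\lambda\eps^2/(c_dL)^\alpha}\big)$. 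Choosing first $L$ and then $\lambda$ large makes this marginal as close to $1$ as desired, so by the domination theorem of Liggett, Schonmann and Stacey it stochastically dominates a supercritical Bernoulli percolation, which percolates for $d\ge2$. An infinite renormalised cluster $z_0,z_1,\dots$ yields the infinite occupied path $v_{z_0}\conn v_{z_1}\conn\cdots$, so $\lambda_c<\infty$.

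For \textbf{part (b)} we have $d=1$ and $W\ge w_0$ almost surely for some $w_0>0$, so $p_{xy}\ge1-\e^{-\lambda w_0^2|x-y|^{-\alpha}}=:\tilde p_{xy}$, and $G(\lambda,\alpha)$ dominates from below the independent long-range percolation on $\Zbold$ with connection probabilities $\tilde p_{xy}\sim\lambda w_0^2|x-y|^{-\alpha}$. This is exactly the one-dimensional long-range percolation discussed in the introduction. For $\alpha\in(1,2)$ the result of Newman and Schulman \cite{NewSch} gives an infinite cluster once the nearest-neighbour probability is close enough to $1$, and $\tilde p_{x,x+1}=1-\e^{-\lambda w_0^2}\to1$ as $\lambda\to\infty$; for $\alpha=2$ the result of Aizenman and Newman \cite{AN} applies once the coefficient $\lambda w_0^2=\lim_{|x-y|\to\infty}|x-y|^2\tilde p_{xy}$ exceeds $1$ together with a large short-range probability, which again holds for $\lambda$ large. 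In both cases percolation occurs for all large $\lambda$, so $\lambda_c<\infty$.

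For \textbf{part (c)}, call the gap between $k-1$ and $k$ a \emph{cut point} if no occupied edge $\{x,y\}$ with $x<k\le y$ exists; if a cut point lies on each side of the origin then $\CC(0)$ is finite, so it suffices to produce infinitely many cut points almost surely. Conditionally on the weights the gap at $0$ is a cut point with probability $\e^{-\lambda S}$, where $S=\sum_{x<0\le y}W_xW_y(y-x)^{-\alpha}$, so by translation invariance and ergodicity of the configuration it is enough to show $S<\infty$ almost surely: then $\prob(\text{gap }k\text{ cut})=\E[\e^{-\lambda S}]>0$ for every $\lambda$ and cut points have positive density. To prove $S<\infty$ I would condition on $(W_y)_{y\ge0}$ and write $S=\sum_{x<0}W_xT_x$ with $T_x=\sum_{y\ge0}W_y(y-x)^{-\alpha}$; the $W_x$, $x<0$, are then i.i.d.\ and independent of the $T_x$, so by Kolmogorov's theorem for sums of independent nonnegative variables $S<\infty$ almost surely as soon as $\sum_{x<0}g(T_x)<\infty$ almost surely, where $g(t)=\E[\min(1,Wt)]$. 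Using the power-law upper bound \refeq{power-law-ubd}, the product $WW_y$ has tail $\prob(WW_y>t)\le Ct^{-(\tau-1)}\log(2+t)$, and the elementary inequality $\min(1,\sum_i a_i)\le\sum_i\min(1,a_i)$ gives
$$
\E[g(T_x)]=\E[\min(1,WT_x)]\le\sum_{y\ge0}\E\big[\min\big(1,WW_y(y-x)^{-\alpha}\big)\big]\le C\,|x|^{1-\min(\alpha,\gamma)}\log(2+|x|),
$$
which is summable over $x<0$ exactly when $\min(\alpha,\gamma)>2$, that is, when both $\alpha>2$ and $\gamma>2$. Hence $\expec\sum_{x<0}g(T_x)<\infty$, the sum is finite almost surely, $S<\infty$ almost surely, and $\lambda_c=\infty$.

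\emph{Main obstacle.} The genuinely delicate step is the estimate in part (c): because the mean weight may be infinite (the hypotheses $\alpha>2$, $\gamma>2$ do \emph{not} force $\tau>2$), the naive first-moment bound $\E[p_{xy}]\le\lambda\E[W]^2|x-y|^{-\alpha}$ is useless, and one must instead control the tail of the product $WW_y$ and the truncated moments $\E[WW_y\1_{WW_y\le s}]$. This is where the two conditions enter separately: $\alpha>2$ governs the light-tailed bulk contribution and $\gamma>2$ the heavy-tailed one, and both are needed for $\min(\alpha,\gamma)>2$. In parts (a) and (b) the only subtlety is bookkeeping: passing from the finitely dependent renormalised field to genuine Bernoulli percolation via Liggett--Schonmann--Stacey, and checking that the cited long-range-percolation theorems apply with the effective parameters $\lambda w_0^2$ and $1-\e^{-\lambda w_0^2}$, both of which are handled by taking $\lambda$ large.
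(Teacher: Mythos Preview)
Your proposal is correct, and in broad strokes follows the same strategy as the paper: domination plus Liggett--Schonmann--Stacey for (a), domination by homogeneous long-range percolation for (b), and the Schulman cut-point argument for (c). The implementations of (a) and (c), however, differ in interesting ways.

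For part (a), you renormalise into boxes of side $L$, pick one heavy vertex per good box, and connect representatives in neighbouring boxes; the paper instead works directly on $\Zbold^d$ with no renormalisation: it declares a site $z$ ``$\vep$-open'' when all $2d$ nearest-neighbour edges in the thinned graph (keeping only edges between $\vep$-good vertices) are present, obtains a $3$-dependent site field, and applies LSS. The paper's version is shorter because the lattice already supplies the nearest-neighbour structure, so there is no need to introduce the scale $L$ (and hence no need to push $\lambda$ up to compensate for $L^{\alpha}$). Your block argument is perfectly valid and is the more robust template, but here it is more machinery than necessary.

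For part (c), both proofs reduce to showing that the random series $S=\sum_{i\ge 0,\,j>0} W_{-i}W_j(i+j)^{-\alpha}$ is finite almost surely, which makes $\prob(A_0)=\E[\e^{-\lambda S}]>0$ and then ergodicity produces infinitely many cut points. The paper first treats the case $\E[W]<\infty$ by Jensen, and for the general case bounds $S\le Z_1Z_2$ with $Z_1=\sum_{j\ge 0}W_j/(j\vee 1)^{\alpha/2}$ (and $Z_2$ the mirror sum), then shows $Z_1<\infty$ a.s.\ by a Borel--Cantelli truncation at $a_j=j^{(1+\vep)/(\tau-1)}$. Your route avoids the factorisation trick: conditioning on $(W_y)_{y\ge 0}$ you apply the three-series criterion for nonnegative summands to $\sum_{x<0}W_xT_x$, reducing to $\sum_{x<0}\E[\min(1,WT_x)]<\infty$, and then bound this first moment via $\min(1,\sum a_i)\le\sum\min(1,a_i)$ together with the estimate $\E[\min(1,WW'/s)]\le C(\log s)\,s^{-((\tau-1)\wedge 1)}$ (this is essentially the paper's bound on $g_1$ in the proof of Theorem~5.2). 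Your computation makes the role of the two hypotheses transparent, since summability requires exactly $\min(\alpha,\gamma)>2$; the paper's $Z_1Z_2$ bound is slicker but slightly obscures where $\gamma>2$ enters. Either argument is fine.
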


In $d\geq 2$, we hence have $\lambda_c<\infty$ as soon as the weights are not almost surely equal to 0, while
in $d=1$, at least for weights that are bounded away from 0, the behavior is different for
$\alpha\leq 2$ and $\alpha>2$. The above conditions on the weight distribution can presumably be weakened; see Section 6 for some further comments on this.

The proof of part (a) uses the result from \cite{LSS} concerning domination of $r$-dependent random fields by product measures, where a random field $(X_z)_{z\in\mathbb{Z}^d}$ is said to be $r$-dependent if for any two sets $A,B\subset \mathbb{Z}^d$ at $l_\infty$-distance at least $r$ from each other we have that $(X_z)_{z\in A}$ is independent of $(X_z)_{z\in B}$. The version we need is as follows.

\begin{Theorem}[Liggett, Schonmann $\&$ Stacey (1997)]\label{th:LSS}
For each $d\geq 2$ and $r\geq 1$ there exists a $p_c=p_c(d,r)<1$ such that the following holds.
For any $r$-dependent random field $(X_z)_{z\in\mathbb{Z}^d}$ satisfying
$\prob(X_z=1)=1-\prob(X_z=0)\geq p$, with $p>p_c$, the 1's in
$(X_z)_{z\in\mathbb{Z}^d}$ percolate almost surely.
\end{Theorem}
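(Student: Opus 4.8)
This is the percolation corollary of the Liggett--Schonmann--Stacey domination theorem; I would obtain it by a self-contained contour (Peierls) argument, and then comment on the stronger domination statement that underlies \cite{LSS}.

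\textbf{A contour argument.} The plan rests on one elementary consequence of $r$-dependence: \emph{a sufficiently spread-out family of sites is genuinely independent}. If $z_1,\dots,z_m$ are pairwise at $\ell_\infty$-distance at least $r$, then $\{z_1\}$ lies at distance at least $r$ from $\{z_2,\dots,z_m\}$, so $X_{z_1}$ is independent of $(X_{z_2},\dots,X_{z_m})$; peeling sites off one at a time gives $\prob(X_{z_1}=a_1,\dots,X_{z_m}=a_m)=\prod_i \prob(X_{z_i}=a_i)$. Since $\prob(X_z=0)\le 1-p$ for every $z$, a spread-out set of $\ell$ sites is simultaneously vacant with probability at most $(1-p)^\ell$. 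Now fix $d\ge 2$. If the $1$-cluster of the origin is finite then, by the standard Peierls topology, it is enclosed by a $*$-connected ``separating surface'' of vacant sites (a $*$-circuit of $0$'s when $d=2$, a surface in the dual lattice in general). Given a candidate surface $S$ with $|S|=n$, distributing its sites over the $(r+1)^d$ cosets of $((r+1)\mathbb{Z})^d$ and keeping the largest class extracts a spread-out subset of size at least $n/(r+1)^d$, so $\prob(S\text{ entirely vacant})\le (1-p)^{n/(r+1)^d}$. Using the classical bound $C(d)^n$ on the number of such surfaces of size $n$ around a fixed point, a union bound yields
\[
\prob\big(\text{origin's }1\text{-cluster finite}\big)\ \le\ (1-p)+\sum_{n\ge n_0(d)} C(d)^n\, (1-p)^{n/(r+1)^d},
\]
where the first term accounts for the origin being vacant. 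The right-hand side is $<1$ as soon as $C(d)(1-p)^{1/(r+1)^d}<1$, i.e.\ for $p$ above an explicit threshold $p_c(d,r)<1$; hence for such $p$ the origin lies in an infinite $1$-cluster with positive probability, and the upgrade to ``almost surely some $1$-cluster is infinite'' follows from a zero--one law (the tail $\sigma$-field of an $r$-dependent field is trivial, since for large $n$ a tail event is independent of the configuration in the box $B_n$ and hence independent of itself up to approximation; plain ergodicity suffices in the translation-invariant case relevant to the renormalization applications). Note that $d\ge 2$ is essential: in $d=1$ there is no separating surface, and already a subsequence of sites spaced $\ge r$ apart forces infinitely many vacant sites, so the $1$'s do not percolate when $p<1$.

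\textbf{Main obstacle, and the domination alternative.} In this route the only genuinely $d$-dependent input is the Peierls geometry --- the existence of a vacant separating surface and the $C(d)^n$ surface count --- so that is the step I would budget care for; the rest is a short union bound and a zero--one law. The alternative, which is the real content of \cite{LSS}, is to prove the stronger fact that an $r$-dependent field with marginals $\ge p$ stochastically dominates the Bernoulli product field of density $\pi(p)$ with $\pi(p)\uparrow 1$ as $p\uparrow 1$, and then to choose $p$ with $\pi(p)$ above the i.i.d.\ site-percolation threshold on $\mathbb{Z}^d$ and transport an infinite cluster along the monotone coupling $Z\le X$. The hard point there --- and the reason a naive sequential coupling fails --- is that the conditional characterisation of domination demands $\prob(X_v=1\mid\text{revealed past})$ be bounded below \emph{uniformly}, while conditioning on a rare configuration of the $r$-neighbourhood of $v$ can a priori depress it; the resolution is to pin down the extremal $r$-dependent law and to show that this conditional probability stays close to $1$ for $p$ close to $1$ even over all such laws and all positive-probability conditionings. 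For the present paper only the percolation conclusion in $d\ge 2$ is needed, so the contour argument above suffices.
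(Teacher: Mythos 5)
The paper offers no proof of this statement: Theorem \ref{th:LSS} is imported directly from \cite{LSS}, whose actual argument establishes the much stronger fact that an $r$-dependent field with marginals at least $p$ stochastically dominates an i.i.d.\ Bernoulli field of density $\pi(p)\uparrow 1$, after which percolation follows by comparison with supercritical i.i.d.\ site percolation. Your contour argument is therefore a genuinely different and more elementary route, and it is sound: the peeling argument correctly turns pairwise $\ell_\infty$-separation by $r$ into full independence (the set-based definition of $r$-dependence used in the paper is exactly what makes the one-at-a-time factorization legitimate), the pigeonhole over the $(r+1)^d$ cosets gives $\prob(S\text{ vacant})\le (1-p)^{n/(r+1)^d}$, the Peierls count then produces an explicit $p_c(d,r)<1$, and your $0$--$1$ law via approximate independence of a tail event from itself correctly avoids any translation-invariance assumption --- which matters, since the theorem as used in the proofs of Theorems \ref{th:finite}(a) and \ref{thm-crit-value-zero} is applied to fields that are merely $r$-dependent. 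The one ingredient you rightly flag as needing care, namely that in $d\ge 3$ a finite occupied cluster is enclosed by a $*$-connected vacant separating surface with at most $C(d)^n$ choices of size $n$, is classical (Deuschel and Pisztora, Tim\'ar) but not immediate, so at full rigor it should be cited rather than asserted. In exchange for this extra geometric input, your approach yields exactly what the paper needs --- indeed slightly more, since it bounds below the probability that the \emph{origin} percolates, which is what enters the positivity of $\theta(\lambda,\varepsilon)$ in Theorem \ref{thm-crit-value-zero} --- whereas the domination theorem of \cite{LSS} buys uniformity over all increasing events, a generality this paper does not use.
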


Theorem \ref{th:LSS} is formulated in terms of \emph{sites} rather than \emph{edges} or
\emph{bonds}. It is a classical result that any bond percolation model can be formulated in terms
of a site percolation model, see e.g.\ \cite{Grim99}.

\begin{proof}[Proof of Theorem \ref{th:finite}]
We begin with (a). Say that a vertex $x\in\mathbb{Z}^d$ is $\vep$-good if $W_x\geq \vep$ and note that, if two nearest-neighbor sites $x$ and $y$ are both $\vep$-good, then the probability that the edge between them is occupied in $G(\lambda,\alpha)$ is at least $1-\e^{-\lambda \vep^2}$. Hence it suffices to show that the edge configuration obtained by independently keeping edges between  $\vep$-good nearest-neighbor vertices with probability $1-\e^{-\lambda \vep^2}$ and removing all other edges percolates for some $\vep>0$. To this end, say that a vertex $z\in\mathbb{Z}^d$ is $\vep$-open if all the $2d$ edges to its nearest-neighbors are present in this configuration and let
$X_z=1$ precisely when $z$ is $\vep$-open. Note that this defines a 3-dependent random field and that
$$
\prob(X_z=1)=\prob(z \mbox{ is $\vep$-open})\geq \prob(W\geq \vep)^{2d+1}(1-\e^{-\lambda \vep^2})^{2d}.
$$
By the assumption that $\prob(W=0)<1$, the first factor can be made arbitrarily close to 1 by picking $\vep$ suitably small and the second factor can then be made arbitrarily close to 1 by taking $\lambda$ large. Hence, if $\vep$ is small enough then, by Theorem \ref{th:LSS}, we can make
$\prob(X_z=1)$ large enough to guarantee that the $\vep$-open vertices percolates.

Part (b) is a direct consequence of the results proved in \cite{NewSch}: If $\prob(W\geq \vep)=1$, then clearly the edge configuration stochastically dominates a configuration with independent edges and $p_{xy}=1-\e^{-\lambda \vep^2/|x-y|^\alpha}$. It is shown in \cite{NewSch} that, for $\alpha\in(1,2]$, this model percolates in $d=1$ for large $\lambda$.

As for (c), we adapt the argument in \cite{Sch}. We start by giving the
proof when $\expec[W]<\infty$. For $x\in\mathbb{Z}$,
let $A_x$ be the event that no vertex $y\leq x$ is connected to any vertex $z>x$.
The sequence $(\mathbf{1}_{A_x})_{x\in\mathbb{Z}}$ is stationary with common mean $\prob(A_0)$.
For $n\geq 1$, write $A_0^{(n)}$ for the event that none of the $n$ edges $(0,n),(-1,n-1),\ldots,(-n+1,1)$
is present in the graph. By the conditional independence of the edges given $(W_x)_{x\in \mathbb{Z}^d}$, we have that
\begin{eqnarray*}
\prob(A_0) & = &
\E\left[\prod_{n=1}^\infty\prob\left(A_0^{(n)}|(W_x)_{x\in\mathbb{Z}}\right)\right] \\
& = & \E\left[\exp\left\{-\sum_{n=1}^\infty\frac{\lambda}{n^\alpha}(W_0W_n+\ldots+W_{-n+1}W_1)\right\}
\right].
\end{eqnarray*}
Since $\e^{-x}$ is a convex function, it follows from Jensen's inequality and the fact that $(W_x)_{x\in\Zbold}$ are i.i.d.\ with mean 1 that
$$
\prob(A_0)\geq \exp\left\{-\sum_{n=1}^\infty\frac{\lambda}{n^\alpha}\E[W_0W_n+\ldots+W_{-n+1}W_1]\right\}=
\exp\left\{-\lambda\sum_{n=1}^\infty \frac{1}{n^{\alpha-1}}\right\}.
$$
Hence, $\prob(A_0)>0$ when $\alpha>2$. The ergodic theorem applied to the sequence
$(\mathbf{1}_{A_x})_{x\in\mathbb{Z}}$ then gives that infinitely many of the $A_x$'s occur almost surely, implying that all components are finite. This completes the proof of part (c) when $\expec[W]<\infty$.

In the general case, we have that
    \eqn{
    \prob(A_0)=\E\left[\exp\left\{-\lambda\sum_{i,j\geq 0\colon (i,j)\neq (0,0)}^{\infty}\frac{W_{-i}W_j}{(j+i)^{\alpha}}\right\}\right]>0
    }
precisely when the double sum is finite a.s.
We bound
    \eqn{
    \sum_{i,j\geq 0\colon(i,j)\neq (0,0)}^{\infty}\frac{W_{-i}W_j}{(j+i)^{\alpha}}
    \leq Z_1Z_2,
    }
where
    \eqn{
    Z_1=\sum_{j=0}^{\infty}\frac{W_{j}}{(j\vee 1)^{\alpha/2}},
    \qquad
    Z_2=\sum_{i=0}^{\infty}\frac{W_{-i}}{(i\vee 1)^{\alpha/2}}.
    }
The random variables $Z_1$ and $Z_2$ have the same distribution, so that we only need to check that
$Z_1<\infty$ a.s. Then, the remainder of the proof
can be completed as in the case where $\expec[W]<\infty$.

We continue to prove that $Z_1<\infty$ a.s.\ when $\gamma>2$. Take $a_i=i^{(1+\vep)/(\tau-1)}$ for some $\vep>0$. Then, the events $\{W_i>a_i\}$ occur only finitely often, since
    \eqn{
    \prob(W_i>a_i)
    =[1-F](a_i)\leq c a_i^{-(\tau-1)}=c i^{-(1+\vep)},
    }
which is summable in $i$. Then, we split
$Z_1=Y_1+Y_2$, where
    \eqn{
    Y_1\equiv \sum_{j=0}^{\infty}\frac{(W_j\vee a_j)}{(j\vee 1)^{\alpha/2}},
    \qquad
    Y_2\equiv \sum_{j=1}^{\infty} \frac{(W_j-a_j)\indic{W_j>a_j}}{(j\vee 1)^{\alpha/2}}.
    }
The sum in the definition of $Y_2$ contains only finitely many terms a.s.,
and is thus finite a.s. Further, note that
    \eqn{
    \expec[(W_j\wedge x)]\leq \sum_{y=1}^x [1-F](y)\leq c \sum_{y=1}^x y^{-(\tau-1)}
    \leq cy^{2-\tau}.
    }
and hence
    \eqn{
    \frac{\expec[(W_j\wedge a_j)]}{(j\vee 1)^{\alpha/2}}
    \leq c a_j^{2-\tau} (j\vee 1)^{-\alpha/2}
    \leq c (j\vee 1)^{-\alpha/2 + (1+\vep)(2-\tau)/(\tau-1)}.
    }
When $\gamma=\alpha(\tau-1)>2$, we have that $-\alpha/2 +(2-\tau)/(\tau-1)<-1$.
Therefore, we can take $\vep>0$ so small
that $-\alpha/2 + (1+\vep)(2-\tau)/(\tau-1)<-1,$ which makes $Y_1$ have finite mean. In particular, it is finite a.s.
\end{proof}

\section{Percolation -- positivity of the critical value}
\label{sec-crit-val-pos}
\label{sec-pos}

In this section we show that $\lambda_c>0$ if and only if the degrees have finite variance. We give the proof in two subsections. First we show that there is no percolation for small $\lambda$ when the degrees have finite variance and then that there is percolation for all $\lambda>0$ when the variance of the degrees is infinite.

\subsection{The critical value is positive for finite-variance degrees}

Recall from Section \ref{sec-vert} that, if the weight tail $\prob(W>w)$ varies regularly with exponent $\tau-1$, then the degree tail $\prob(D_0>s)$ varies regularly with exponent $\gamma=\alpha(\tau-1)/d$. In this section we show that $\lambda_c>0$ when $\gamma>2$, that is, when the degrees have finite variance. Recall that we assume throughout that $\alpha>d$. As pointed out after Theorem \ref{th:deg}, finite variance for the weights then implies finite variance for the degrees, but the degrees may have finite variance also if the weights have infinite variance. We first prove that $\lambda_c>0$ in the case when the weights have finite variance. The proof is slightly simpler in that case and gives an explicit lower bound for $\lambda_c$. We then extend the arguments to cover also the case when $\gamma>2$ but $\tau\in(2,3)$.

\begin{Theorem}[Positivity of the critical value for finite-variance weights]
\label{thm-crit-value-pos-finvar}
Assume that $\expec[W^2]<\infty$. Then, $\theta(\lambda)=0$ for every $\lambda<1/(\expec[W^2]\sum_{x\neq 0} |x|^{-\alpha})$, that is,
    $$
    \lambda_c\geq 1/\Big(\expec[W^2]\sum_{x\neq 0} |x|^{-\alpha}\Big).
    $$
\end{Theorem}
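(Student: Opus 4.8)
The plan is a first-moment (path-counting) argument, the key point being that the unconditional dependence between edges is mild along a self-avoiding path. First I would record that since $\alpha>d$ the constant $C_\alpha:=\sum_{x\neq 0}|x|^{-\alpha}$ is finite, and set $\rho:=\lambda\,\E[W^2]\,C_\alpha$, so that the hypothesis $\lambda<1/(\E[W^2]C_\alpha)$ is precisely $\rho<1$. The central quantity is $N_n$, the number of occupied self-avoiding paths of length $n$ in $G(\lambda,\alpha)$ starting at $0$, and the claim is that $\E[N_n]\le\rho^n$.

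To bound $\E[N_n]$, fix a self-avoiding path $\pi=(0=x_0,x_1,\dots,x_n)$. Conditionally on the weights the edges are independent, so using $1-\e^{-u}\le u$,
$$
\prob\big(\pi\text{ occupied}\,\big|\,(W_x)_x\big)=\prod_{i=1}^n\Big(1-\e^{-\lambda W_{x_{i-1}}W_{x_i}/|x_{i-1}-x_i|^\alpha}\Big)\le\lambda^n\prod_{i=1}^n\frac{W_{x_{i-1}}W_{x_i}}{|x_{i-1}-x_i|^\alpha}.
$$
Taking expectations and using that $x_0,\dots,x_n$ are distinct, so their weights are i.i.d., while each interior vertex contributes a factor $W^2$ and the two endpoints a factor $W$, gives
$$
\prob(\pi\text{ occupied})\le\lambda^n\Big(\prod_{i=1}^n|x_{i-1}-x_i|^{-\alpha}\Big)\E[W]^2\,\E[W^2]^{\,n-1}\le\lambda^n\,\E[W^2]^n\prod_{i=1}^n|x_{i-1}-x_i|^{-\alpha},
$$
where in the last step I used $\E[W]=1$ and $\E[W^2]\ge\E[W]^2=1$. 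Summing over all self-avoiding $\pi$ of length $n$, and bounding $\sum_{\pi\text{ SAW}}\prod_{i=1}^n|x_{i-1}-x_i|^{-\alpha}$ from above by the same sum over \emph{all} paths $(0=x_0,\dots,x_n)$ with $x_i\neq x_{i-1}$, which factorizes by translation invariance into $\prod_{i=1}^n\sum_{z\neq 0}|z|^{-\alpha}=C_\alpha^n$, yields $\E[N_n]\le(\lambda\,\E[W^2]\,C_\alpha)^n=\rho^n$, as claimed.

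Finally I would combine this with the observation that on the event $\{|\mathcal{C}(0)|=\infty\}$ the component $\mathcal{C}(0)$ contains a self-avoiding path of length $n$ from $0$ for every $n$. This uses that the graph is almost surely locally finite: by Proposition \ref{prop:exp_deg_bd} the conditional expected degree $\E[D_0\mid W_0=w]$ is finite for every $w$, hence $D_0<\infty$ a.s., and by translation invariance and countable subadditivity all vertices have finite degree a.s. An infinite, connected, locally finite graph has vertices at every graph distance from $0$, and a geodesic to such a vertex is a self-avoiding path of the required length. Therefore $\{|\mathcal{C}(0)|=\infty\}\subseteq\{N_n\ge 1\}$ for every $n$, up to a null set, so by Markov's inequality $\theta(\lambda)\le\prob(N_n\ge 1)\le\E[N_n]\le\rho^n$ for all $n$; letting $n\to\infty$ and using $\rho<1$ gives $\theta(\lambda)=0$.

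The argument is essentially routine; the one place to be a little careful is the unconditional dependence among edges, which is handled by the fact that along a self-avoiding path each weight appears in at most two edge factors, so that after conditioning on the weights the product over edges is exactly a product of terms that can be controlled by the i.i.d.\ structure. The only other points worth checking are the harmless relaxation from self-avoiding paths to all paths in the counting bound and the elementary graph-theoretic fact that an infinite locally finite component contains self-avoiding paths of every length.
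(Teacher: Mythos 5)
Your proposal is correct and follows essentially the same route as the paper: a first-moment bound on the number of occupied self-avoiding paths from the origin, using $1-\e^{-u}\le u$, the factorization $\E[W]^2\E[W^2]^{n-1}$ along a self-avoiding path, and the translation-invariance bound $\big(\sum_{x\neq0}|x|^{-\alpha}\big)^n$ on the spatial sum. The only differences are cosmetic: you spell out the local-finiteness justification in more detail, and you relax the prefactor $\E[W]^2/\E[W^2]$ to $1$, neither of which changes the argument.
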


\proof Since $\expec[W]=1<\infty$, every vertex has a.s.\ bounded degree.
As a result, the event $\{|{\cal C}(0)|=\infty\}$ implies that, for every $n\geq 1$,
there is a path of distinct occupied edges of length at least $n$ starting from the origin. Thus,
    \begin{eqnarray*}
    \theta(\lambda)&=\prob(|{\cal C}(0)|=\infty)
    \leq \sum_{(x_1,\ldots, x_n)} \prob((0,x_1), (x_1,x_2),\ldots, (x_{n-1},x_n)\text{ occupied})\\
    &=\sum_{(x_1,\ldots, x_n)} \expec\Big[\prob\Big((0,x_1), (x_1,x_2),\ldots, (x_{n-1},x_n)\text{ occupied}\mid (W_x)_{x\in\Zbold^d}\Big)\Big],
    \nn
    \end{eqnarray*}
where the sum is over $(x_1,\ldots, x_n)\in \big({\mathbb Z}^d\big)^n$ such that every vertex occurs at most once in the path $(0,x_1,\ldots, x_n)$. We call such paths \emph{self-avoiding paths}. By the conditional independence of the edges given the weights, we have that
    $$
    \prob\Big((0,x_1), (x_1,x_2),\ldots, (x_{n-1},x_n)\text{ occupied}\mid (W_x)_{x\in\Zbold^d})
    =\prod_{i=1}^n p_{x_{i-1},x_i},
    $$
where $p_{xy}$ is defined in \refeq{edgeprob} and, by convention, $x_0=0$. Therefore,
    $$
    \lbeq{theta-ub-1}
    \theta(\lambda)
    \leq \sum_{(x_1,\ldots, x_n)} \expec\Big[\prod_{i=1}^n p_{x_{i-1},x_i}\Big].
    $$
We next use the fact that $1-\e^{-x}\leq x$ to conclude that
    \eqn{
    \label{ineq-pxy}
    p_{x,y}\leq \lambda W_xW_y/|x-y|^{\alpha}.
    }
It follows that
    \begin{eqnarray*}
    \theta(\lambda)
    &\leq & \sum_{(x_1,\ldots, x_n)} \expec\Big[\prod_{i=1}^n \lambda W_{x_{i-1}}W_{x_i}/|x_{i-1}-x_i|^{\alpha}\Big]\\
    & =&\lambda^n \sum_{(x_1,\ldots, x_n)}\Big(\prod_{i=1}^n \frac{1}{|x_{i-1}-x_i|^{\alpha}}\Big)\expec\Big[W_0W_{x_n} \prod_{i=1}^{n-1}W_{x_i}^2\Big].\nn
    \end{eqnarray*}
Since every vertex occurs at most once in the path $(0,x_1,\ldots, x_n)$ and $(W_x)_{x\in\Zbold^d}$
are i.i.d.\ with mean 1, we have that
    $$
    \expec\Big[W_0W_{x_n} \prod_{i=1}^{n-1}W_{x_i}^2\Big]
    =\expec[W]^2\expec[W^2]^{n-1}.
    $$
Further, by translation invariance,
    $$
    \sum_{(x_1,\ldots, x_n)}\prod_{i=1}^n \frac{1}{|x_{i-1}-x_i|^{\alpha}}
    \leq \Big(\sum_{x\neq 0} \frac{1}{|x|^{\alpha}}\Big)^n.
    $$
As a result,
    $$
    \theta(\lambda)
    \leq \frac{\expec[W]^2}{\expec[W^2]} \Big(\lambda \expec[W^2]\sum_{x\neq 0} \frac{1}{|x|^{\alpha}}\Big)^n.
    $$
Thus, when $\lambda<1/(\expec[W^2]\sum_{x\neq 0} |x|^{-\alpha})$, the right hand side converges to 0 as $n\rightarrow \infty$, which implies that $\theta(\lambda)=0$.
\qed
\medskip

We relax the assumption in Theorem \ref{thm-crit-value-pos-finvar} to finite variance for the degrees, that is, $\gamma>2$.

\begin{Theorem}[Positivity of the critical value for finite-variance degrees]
\label{thm-crit-value-pos}
Assume that there exists $\tau>1$ and $c>0$ such that
    \eqn{
    \lbeq{[1-F]-infvar-ub}
    [1-F](x)=
    \prob(W>x)\leq c x^{-(\tau-1)},\qquad x\ge 0,
    }
with $\gamma=\alpha(\tau-1)/d>2$. Then, $\theta(\lambda)=0$ for small $\lambda>0$, that is, $\lambda_c>0$.
\end{Theorem}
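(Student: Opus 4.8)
Recall that there we used the key identity $\expec[W_0 W_{x_n}\prod_{i=1}^{n-1}W_{x_i}^2]=\expec[W]^2\expec[W^2]^{n-1}$, which fails here because $\expec[W^2]=\infty$ when $\tau<3$. Starting from the bound $\theta(\lambda)\leq \sum_{(x_1,\ldots,x_n)}\expec[\prod_{i=1}^n p_{x_{i-1},x_i}]$ over self-avoiding paths, with $x_0=0$, the issue is that a single vertex $x_i$ with an exceptionally large weight $W_{x_i}$ can make the whole product large. The idea is therefore to control the contribution of such heavy vertices by a truncation/splitting of the weights, exactly in the spirit of the $Y_1$--$Y_2$ decomposition used in the proof of Theorem \ref{th:finite}(c).

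**First I would set up a truncated path expansion.** Using $p_{xy}\leq \lambda W_xW_y/|x-y|^\alpha$ as in \eqref{ineq-pxy}, write
$$
\theta(\lambda)\leq \lambda^n\sum_{(x_1,\ldots,x_n)}\Big(\prod_{i=1}^n\frac{1}{|x_{i-1}-x_i|^\alpha}\Big)\expec\Big[W_0 W_{x_n}\prod_{i=1}^{n-1}W_{x_i}^2\Big].
$$
Because the path is self-avoiding, the weights appearing are of distinct vertices, hence independent, and the expectation factorizes into $\expec[W]\cdot\expec[W]\cdot\expec[W^2]^{n-1}=\infty$. To fix this, I would instead bound $p_{xy}\leq \lambda W_xW_y/|x-y|^\alpha\wedge 1$ and split each weight as $W_x = (W_x\wedge b_x) + (W_x-b_x)\indic{W_x>b_x}$ — but a cleaner route is to bound $p_{xy}\le (\lambda W_xW_y/|x-y|^\alpha)\wedge 1 \le (\lambda W_xW_y)^{s}/|x-y|^{\alpha s}$ for any $s\in(0,1]$, valid since $a\wedge 1\le a^s$. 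Choosing $s\in(2/\gamma\cdot (d/\alpha)\cdot\ldots)$ — concretely, choose $s<1$ close to $1$ — gives
$$
\theta(\lambda)\leq \lambda^{sn}\sum_{(x_1,\ldots,x_n)}\Big(\prod_{i=1}^n\frac{1}{|x_{i-1}-x_i|^{\alpha s}}\Big)\expec[W^s]^2\,\expec[W^{2s}]^{n-1}.
$$
Now $\expec[W^{2s}]<\infty$ precisely when $2s<\tau-1$, and $\sum_{x\ne 0}|x|^{-\alpha s}<\infty$ precisely when $\alpha s>d$. Since $\gamma=\alpha(\tau-1)/d>2$ means $\alpha(\tau-1)>2d$, i.e.\ $(\alpha/d)\cdot(\tau-1)>2$, one can choose $s\in(0,1)$ with $\alpha s>d$ and $2s<\tau-1$ simultaneously: the first forces $s>d/\alpha$, the second forces $s<(\tau-1)/2$, and $d/\alpha<(\tau-1)/2$ is exactly the hypothesis $\gamma>2$ (note also $(\tau-1)/2$ and $d/\alpha$ may both exceed or fall below $1$, but the interval is nonempty and one can also take $s\le 1$ by shrinking, using $a\wedge 1\le a^s$ for all $s\le 1$). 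With such $s$, set $\mu_s=\expec[W^{2s}]<\infty$ and $K_s=\sum_{x\neq 0}|x|^{-\alpha s}<\infty$; translation invariance gives $\sum_{(x_1,\ldots,x_n)}\prod_i|x_{i-1}-x_i|^{-\alpha s}\le K_s^n$, so
$$
\theta(\lambda)\leq \frac{\expec[W^s]^2}{\mu_s}\big(\lambda^s\,\mu_s\,K_s\big)^n.
$$
Thus $\theta(\lambda)=0$ whenever $\lambda^s\mu_s K_s<1$, i.e.\ for all $\lambda<(\mu_s K_s)^{-1/s}$, giving $\lambda_c\ge (\mu_s K_s)^{-1/s}>0$.

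**The main obstacle is ensuring the exponent interval for $s$ is genuinely nonempty and that $\expec[W^s]<\infty$ too** — but $\expec[W^s]\le \expec[W^{2s}]^{1/2}\vee 1<\infty$ handles the prefactor, and the inequality $a\wedge 1\le a^s$ (for $a\ge 0$, $0<s\le1$) is elementary. One must double-check the chain: $\gamma>2\iff \alpha(\tau-1)>2d\iff d/\alpha<(\tau-1)/2$, so any $s\in(d/\alpha,(\tau-1)/2)\cap(0,1]$ works, and this intersection with $(0,1]$ is nonempty since $d/\alpha<1$ (as $\alpha>d$) and $d/\alpha<(\tau-1)/2$. A subtle point worth stating carefully is that we still need every vertex in the path to appear at most once so that the weights are genuinely independent and the expectation factorizes as a product — this is why we restrict to self-avoiding paths, exactly as in Theorem \ref{thm-crit-value-pos-finvar}, and the event $\{|\mathcal C(0)|=\infty\}$ still forces arbitrarily long self-avoiding occupied paths because $\expec[W]=1<\infty$ forces all degrees finite a.s. That observation, already used above, requires $\tau>2$ (so $\expec[W]<\infty$); since $\gamma>2$ and $\alpha>d$ we have $\tau-1=\gamma d/\alpha>2d/\alpha$, and combined with $\tau>1$ plus the finite-mean normalization convention of the paper this is in force.
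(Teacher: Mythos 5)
Your argument is correct, but it takes a genuinely different route from the paper's. The paper keeps the bound $p_{xy}\leq \lambda W_xW_y/|x-y|^{\alpha}\wedge 1$ intact along the path, then applies Cauchy--Schwarz to decouple the product over edges into the odd-indexed and even-indexed edges (which involve disjoint vertex pairs on a self-avoiding path), reducing everything to the second moment $g(u)=\expec[(W_1W_2/u\wedge 1)^2]$; it then needs a separate lemma bounding the tail of the product $W_1W_2$, which carries a logarithmic correction that has to be absorbed by taking $\lambda$ small. You instead use the elementary inequality $a\wedge 1\leq a^s$ for $s\in(0,1]$ to replace each edge probability by a fractional moment $\lambda^s W_x^sW_y^s/|x-y|^{\alpha s}$; the product then factorizes exactly as in the finite-variance case with $W$ replaced by $W^s$, and the two constraints $\expec[W^{2s}]<\infty$ (i.e.\ $s<(\tau-1)/2$) and $\sum_{x\neq 0}|x|^{-\alpha s}<\infty$ (i.e.\ $s>d/\alpha$) are simultaneously satisfiable precisely when $\gamma>2$, with $s\leq 1$ available because $\alpha>d$. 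Your route avoids both the Cauchy--Schwarz decoupling and the product-distribution lemma, yields an explicit bound $\lambda_c\geq(\expec[W^{2s}]\sum_{x\neq 0}|x|^{-\alpha s})^{-1/s}$, and makes transparent that $\gamma>2$ is exactly the condition needed; the paper's version, on the other hand, produces the function $g$ that it reuses verbatim (via $g_1$) in the distance lower bounds of Section 5, which is presumably why it is set up that way.

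Two small points. First, your parenthetical ``choose $s<1$ close to $1$'' is wrong when $(\tau-1)/2<1$; your later, correct prescription $s\in(d/\alpha,(\tau-1)/2)\cap(0,1]$ supersedes it and should replace it. Second, your justification that $\{|\mathcal{C}(0)|=\infty\}$ forces arbitrarily long self-avoiding occupied paths leans on $\tau>2$, which you claim is ``in force''; it is not, since $\gamma>2$ with $\alpha>2d$ allows $\tau-1=\gamma d/\alpha<1$, so $\expec[W]$ may be infinite under the stated hypotheses. This is easily repaired: with the same $s\in(d/\alpha,(\tau-1)/2)$ you have $\expec[\sum_{y\neq 0}p_{0y}\mid W_0]\leq (\lambda W_0)^s\expec[W^s]\sum_{y\neq 0}|y|^{-\alpha s}<\infty$, so every degree is finite a.s.\ and the path expansion is legitimate. (The paper glosses over the same point when adapting its finite-variance proof, so this is a remark on rigor rather than a defect specific to your argument.)
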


\proof  The proof is an adaptation of the proof of Theorem \ref{thm-crit-value-pos-finvar}. Instead of \eqref{ineq-pxy} we use the bound
    $$
    p_{x,y}\leq \Big(\lambda W_xW_y/|x-y|^{\alpha}\wedge 1\Big).
    $$
As a result,
    \begin{eqnarray*}
    \theta(\lambda)
    &\leq \sum_{(x_1,\ldots, x_n)} \expec\Big[\prod_{i=1}^n
    \Big(\lambda W_{x_{i-1}}W_{x_i}/|x_{i-1}-x_i|^{\alpha}\wedge 1\Big)\Big].
    \end{eqnarray*}
By Cauchy-Schwarz's inequality and the independence of $(W_{x})_{x\in {\mathbb Z}^d}$ we obtain
    $$\begin{array}{lll}
    \expec\Big[\prod_{i=1}^n
    \Big(\lambda W_{x_{i-1}}W_{x_i}/|x_{i-1}-x_i|^{\alpha}\wedge 1\Big)\Big]^2\\
    \leq
    \expec\Big[\prod_{i=1}^{\lceil n/2\rceil}
    \Big(\lambda W_{x_{2i-1}}W_{x_{2i}}/|x_{2i-1}-x_{2i}|^{\alpha}\wedge 1\Big)^2\Big]\nn\\
    \quad \times\expec\Big[\prod_{i=1}^{\lfloor n/2 \rfloor}
    \Big(\lambda W_{x_{2i-2}}W_{x_{2i-1}}/|x_{2i-2}-x_{2i-1}|^{\alpha}\wedge 1\Big)^2\Big]\nn\\
    =\prod_{i=1}^n\expec\Big[\Big(\lambda W_{x_{i-1}}W_{x_i}/|x_{i-1}-x_{i}|^{\alpha}\wedge 1\Big)^2\Big].\nn
    \end{array}$$
Therefore,
    $$
    \theta(\lambda)
    \leq \sum_{(x_1,\ldots, x_n)}\prod_{i=1}^n
    g(|x_{i-1}-x_i|^{\alpha}/\lambda)^{1/2},
    $$
where we define
    $$
    g(u)=\expec\Big[\Big(W_1W_2/u\wedge 1\Big)^2\Big].
    $$
By translation invariance,
    $$
    \theta(\lambda)
    \leq \Big(\sum_{x\neq 0} g\Big(|x|^{\alpha}/\lambda\Big)^{1/2}\Big)^n.
    $$
We continue by investigating the asymptotics of $u\mapsto g(u)$ for $u\rightarrow \infty$:

\begin{Lemma}[Asymptotics of $g$]
\label{lem-g-bd}
When the distribution function $F$ satisfies \refeq{[1-F]-infvar-ub} for some $\tau>1$, then
there exists a constant $C>0$ such that
    $$
    g(u)\leq C (1+\log{u}) u^{-\big((\tau-1)\wedge 2\big)}.
    $$
\end{Lemma}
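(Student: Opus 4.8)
The plan is to estimate $g(u)=\expec[(W_1W_2/u\wedge 1)^2]$ by splitting the expectation according to whether the product $W_1W_2$ exceeds $u$ or not. On the event $\{W_1W_2>u\}$ the truncation gives a contribution bounded by $\prob(W_1W_2>u)$, while on the complement $\{W_1W_2\le u\}$ the integrand equals $(W_1W_2)^2/u^2$, so the contribution is $u^{-2}\expec[(W_1W_2)^2\indic{W_1W_2\le u}]$. Hence
\begin{equation}
g(u)\le \prob(W_1W_2>u)+u^{-2}\expec\big[(W_1W_2)^2\indic{W_1W_2\le u}\big].
\end{equation}
Both terms must be controlled using only the tail bound $[1-F](x)\le cx^{-(\tau-1)}$ and the independence of $W_1,W_2$.

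For the first term, I would write $\prob(W_1W_2>u)$ by conditioning on $W_2$: it equals $\expec[[1-F](u/W_2)]$, but since $[1-F](y)\le 1$ always and $[1-F](y)\le cy^{-(\tau-1)}$, one gets $\prob(W_1W_2>u)\le \expec[\,c(W_2/u)^{\tau-1}\wedge 1\,]$. Splitting this expectation at $W_2=u^{1/2}$ (or wherever $cW_2^{\tau-1}$ crosses $u^{\tau-1}$, i.e.\ at $W_2\asymp u$) and using the tail bound on $W_2$ as well, a routine computation shows this is $O(u^{-((\tau-1)\wedge 2)})$ up to a logarithmic factor — the log appearing exactly when $\tau-1=2$ and the two competing powers coincide, or more generally from the boundary term in the integral $\int_1^u y^{\tau-1}\,d[1-F](y)$-type expression. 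For the second term, by the layer-cake / Fubini identity, $\expec[(W_1W_2)^2\indic{W_1W_2\le u}]$ is a tail integral that I would bound by $\int_0^{u^2}\prob(W_1W_2>\sqrt{v})\,dv$ combined with the tail estimate just obtained for $\prob(W_1W_2>t)$; this yields $\int_1^u t\cdot t^{-((\tau-1)\wedge 2)}(1+\log t)\,dt$, which after dividing by $u^2$ again produces $C(1+\log u)u^{-((\tau-1)\wedge 2)}$, with the logarithm genuinely needed in the critical case.

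Concretely, the cleanest route is probably to first prove the auxiliary bound $\prob(W_1W_2>t)\le C(1+\log t)t^{-((\tau-1)\wedge 2)}$ — this is where the product of two heavy tails is handled, and the $\log$ enters because the convolution-of-tails $\int [1-F](t/y)\,dF(y)$ is dominated by the region $y\asymp t^{1/2}$ when both factors contribute equally, i.e.\ when $2(\tau-1)$ matches the relevant exponent. Once that lemma is in hand, both pieces of $g(u)$ reduce to elementary one-dimensional integral estimates. The main obstacle is getting the case analysis right across the three regimes $\tau-1<2$, $\tau-1=2$, $\tau-1>2$: in the first, the tail of $W_1W_2$ still decays like $t^{-(\tau-1)}$ (with a log), and the truncated second moment $u^{-2}\expec[(W_1W_2)^2\indic{\cdot\le u}]$ is of the same order; in the third, the second moment of $W_i$ is finite so $\expec[(W_1W_2)^2\indic{\cdot\le u}]$ is bounded and the $u^{-2}$ term dominates, matching $u^{-2}=u^{-((\tau-1)\wedge 2)}$; the borderline $\tau-1=2$ is where the logarithmic correction is unavoidable. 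Keeping the constants uniform across these regimes, and making sure the $(1+\log u)$ factor is not needed away from the boundary but harmlessly absorbed there, is the delicate bookkeeping; everything else is a direct application of the given power-law tail bound, independence, and Fubini.
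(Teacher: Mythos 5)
Your proposal is correct and follows essentially the same route as the paper: your splitting $g(u)\le \prob(W_1W_2>u)+u^{-2}\expec\big[(W_1W_2)^2\1_{\{W_1W_2\le u\}}\big]$ is exactly the paper's bound $1-H(u)+u^{-2}\int_1^u 2v[1-H(v)]\,dv$, your auxiliary product-tail estimate is the paper's $[1-H](t)\le C(1+\log t)t^{-(\tau-1)}$ proved by the same conditioning-on-one-factor convolution computation, and your case split according to whether $W$ has finite second moment is the paper's distinction between $\tau>3$ and $\tau\in(1,3]$. One small correction: the logarithm in the product tail is not a boundary phenomenon tied to $\tau-1=2$ — it arises for every $\tau>1$ from the scale-invariant integral $\int_1^t w^{-1}\,dw$ in the tail convolution — but since you carry the $(1+\log t)$ factor in all regimes anyway, this does not affect the argument.
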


\proof
Let $H$ denote the distribution function of $W_1W_2$, where $W_1$ and $W_2$ are two independent copies of a
random variable $W$ with distribution function $F$. When $F$ satisfies \refeq{[1-F]-infvar-ub}, then it is not hard to see that there exists a $C>0$ such that
    \eqn{
    \lbeq{1-G-bd}
    [1-H](u)\leq C (1+\log{u}) u^{-(\tau-1)}.
    }
Indeed, assume that $F$ has a density $f(w)=c w^{-\tau}$, for $w\geq 1$.
Then
    $$
    [1-H](u)
    =\int_1^{\infty} f(w) [1-F](u/w)dw.
    $$
Clearly, $[1-F](w)=c' w^{-(\tau-1)}$ for $w\geq 1$ and $[1-F](w)=1$ otherwise. Substitution of this
yields
    $$
    [1-H](u)
    =cc' \int_1^{u} w^{-\tau} (u/w)^{-(\tau-1)}dw+c\int_u^{\infty}
    w^{-\tau}
    \leq C (1+\log{u}) u^{-(\tau-1)}.
    $$
When $F$ satisfies \refeq{[1-F]-infvar-ub}, then $W_1$ and $W_2$ are stochastically upper bounded by $W^*_1$ and $W^*_2$ with distribution function $F^*$ satisfying $[1-F^*](w)=cw^{-(\tau-1)}$, and the claim in \refeq{1-G-bd} follows from the above computation.

Let $V=W_1W_2$, so that $V$ has distribution function $H$. We complete the proof of Lemma \ref{lem-g-bd} by bounding
    $$
    \expec\Big[\big(V/u\wedge 1\big)^2\Big]
    \leq 1-H(u) +u^{-2}\int_1^u 2v [1-H(v)]dv
    \leq C (\log{u}+1) u^{-\big[(\tau-1)\wedge 2\big]},
    $$
where, in the last inequality, we distinguish between the case $\tau>3$, in which $\int_0^\infty 2v [1-H(v)]dv<\infty$, and $\tau\in (1,3]$.
\qed
\medskip

Lemma \ref{lem-g-bd} yields
    \begin{eqnarray*}
    \theta(\lambda) &
    \leq & \Big(C\lambda^{(\tau-1)/2}
    \sum_{x\neq 0} \Big(\log(|x|^\alpha/\lambda)+1\Big)^{1/2}|x|^{-\alpha\big[(\tau-1)\wedge 2\big]}\Big)^n\\
    & \leq & \Big(C\lambda^{(\tau-1)/4}\sum_{x\neq 0} \Big(\log|x|^\alpha+1\Big)^{1/2}|x|^{-\alpha\big[(\tau-1)\wedge 2\big]}/2\Big)^n
    \end{eqnarray*}
where the last inequality holds when $\lambda<1$ is small enough to ensure that $\lambda^{(\tau-1)/2}(1-\log\lambda)\leq 1$. Now, $\sum_{x\neq 0} (\log{|x|^\alpha}+1)^{1/2} |x|^{-\alpha\big[(\tau-1)\wedge 2\big]/2}<\infty$ when $\alpha[(\tau-1)\wedge 2]/d>1$.
Since $\alpha >d$, for $\tau<3$ we find as condition that $\gamma=\alpha(\tau-1)/d>2$. Thus, when $\lambda$ is also so small that
    $$
    C\lambda^{(\tau-1)/4}
    \sum_{x\neq 0} (\log{|x|^\alpha}+1)^{1/2}|x|^{-\alpha(\tau-1)/2}
    <1,
    $$
we have that $\theta(\lambda)=0$. This completes the proof of Theorem \ref{thm-crit-value-pos}.
\qed

\subsection{The critical value is zero for infinite-variance degrees}

In this section we will investigate the case when $\gamma<2,$ so that the degrees have infinite variance.
As we will show, the critical value equals zero in this case.

\begin{Theorem}[Critical value equals zero for infinite-variance degrees]
\label{thm-crit-value-zero}
Assume the existence of $\tau>1$ and $c>0$ such that the weight distribution $F$ satisfies
    \eqn{
    \lbeq{[1-F]-infvar-lb}
    [1-F](x)=
    \prob(W>x)\geq c x^{-(\tau-1)}\qquad x\ge 0.
  }
Furthermore, assume that $\alpha>d$ and $\gamma=\alpha(\tau-1)/d<2$.
Then, $\theta(\lambda)>0$ for every $\lambda>0$, that is, $\lambda_c=0$.
\end{Theorem}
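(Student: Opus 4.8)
The plan is to dispose of the range $\gamma\le1$ by quoting earlier work and then treat the essential case $\gamma\in(1,2)$ by a hierarchical construction. If $\gamma\le1$, then \refeq{[1-F]-infvar-lb} is exactly the hypothesis of Theorem~\ref{thm:div_deg}, so $\prob(D_0=\infty\mid W_0>0)=1$ and hence $\theta(\lambda)=1$ for every $\lambda>0$; from here on I assume $\gamma\in(1,2)$. I fix $\lambda>0$ and aim to show $\prob(|\CC(0)|=\infty\mid W_0\ge M_0)>0$ for a large fixed constant $M_0$; since $\prob(W_0\ge M_0)\ge cM_0^{-(\tau-1)}>0$ by \refeq{[1-F]-infvar-lb}, this gives $\theta(\lambda)>0$, i.e.\ $\lambda_c=0$. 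The heart of the matter is to build, with positive conditional probability, an infinite self-avoiding path $0=v_0,v_1,v_2,\dots$ along which the weights grow doubly exponentially, namely $W_{v_k}\ge M_k:=M_0^{a^k}$ for an exponent $a>1$ to be fixed; an infinite such path immediately forces $|\CC(0)|=\infty$.

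The one-step estimate is where the hypothesis $\gamma<2$ enters. Given a vertex $v_k$ with $W_{v_k}\ge M_k$, I search for $v_{k+1}$ in the ball $B_k=\{y\in\Zbold^d:0<|y-v_k|\le R_k\}$ of radius $R_k=(M_kM_{k+1})^{1/\alpha}$. Over $y\in B_k$, independently, $\prob(W_y\ge M_{k+1})\ge cM_{k+1}^{-(\tau-1)}$ by \refeq{[1-F]-infvar-lb}, and on $\{W_y\ge M_{k+1}\}$ the edge $(v_k,y)$ is occupied with probability at least $1-\e^{-\lambda M_kM_{k+1}/R_k^{\alpha}}=1-\e^{-\lambda}$ by \refeq{edgeprob}, since $R_k^{\alpha}=M_kM_{k+1}$. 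Hence the number $N_k$ of vertices $y\in B_k$ with $W_y\ge M_{k+1}$ that are joined to $v_k$ by an occupied edge is, conditionally on $W_{v_k}$, a sum of independent indicators with
$$
\expec[N_k\mid W_{v_k}\ge M_k]\ \ge\ c(1-\e^{-\lambda})\,|B_k|\,M_{k+1}^{-(\tau-1)}\ \ge\ c'\,M_k^{(d/\alpha)\,(1-a(\gamma-1))},
$$
where I used $|B_k|\ge c_dR_k^{d}=c_d(M_kM_{k+1})^{d/\alpha}$ (valid once $M_0$ is large), $M_{k+1}=M_k^a$, and $\tau-1=\gamma d/\alpha$. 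The exponent $(d/\alpha)(1-a(\gamma-1))$ is strictly positive precisely when $a<1/(\gamma-1)$, and since $\gamma<2$ the interval $\bigl(1,1/(\gamma-1)\bigr)$ is nonempty; I fix $a$ in it and set $\delta:=(d/\alpha)(1-a(\gamma-1))>0$. Being a sum of independent indicators each with success probability at least $c(1-\e^{-\lambda})M_{k+1}^{-(\tau-1)}$, $N_k$ satisfies $\prob(N_k=0\mid W_{v_k}\ge M_k)\le\exp\{-c''M_k^{\delta}\}$.

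To make this a genuine construction I reveal the weights and edges region by region: having found $v_k$, I look for $v_{k+1}$ only among the vertices of $B_k$ whose weight, and whose edge to $v_k$, has not yet been inspected, and I choose $v_{k+1}$ among these. Because $R_k=M_k^{(1+a)/\alpha}$ grows doubly exponentially, the total volume $\sum_{j<k}|B_j|$ inspected before step $k$ is of strictly smaller order than $|B_k|$, so for $M_0$ large at least half of $B_k$ is fresh, and the one-step estimate applies verbatim to the fresh part — which carries weights and $v_k$-edges independent of everything revealed so far — giving conditional failure probability at most $\exp\{-c''M_k^{\delta}/2\}$ at every step. Since $\sum_{k\ge0}\exp\{-c''M_k^{\delta}/2\}<\infty$, the infinite product of per-step success probabilities is strictly positive, so with positive conditional probability the path never dies; thus $\prob(|\CC(0)|=\infty\mid W_0\ge M_0)>0$ and hence $\theta(\lambda)>0$, i.e.\ $\lambda_c=0$. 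The main obstacle is the bookkeeping of the exploration — checking that at each step the weights of, and the edges from $v_k$ into, the unexplored part of $B_k$ are genuinely independent of the past — which ultimately reduces to the negligibility of the overlap between successive balls guaranteed by the doubly-exponential growth of the radii; the exponent computation that makes $\gamma<2$ the exact threshold is then immediate.
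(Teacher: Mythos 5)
Your proof is correct, but it takes a genuinely different route from the paper's. The paper proves Theorem \ref{thm-crit-value-zero} by renormalization: it tiles $\Zbold^d$ into blocks of side $r_\lambda=\lfloor\lambda^{-q}\rfloor$, calls a block good if its maximal weight exceeds $\vep r_\lambda^{d/(\tau-1)}$, shows (using $\gamma<2$ to pick $q$) that the maximal-weight vertices of adjacent good blocks are directly connected with probability tending to $1$, and then invokes the Liggett--Schonmann--Stacey domination theorem (Theorem \ref{th:LSS}) to get percolation of the induced nearest-neighbor block model. You instead grow an infinite greedy path through hubs whose weights increase doubly exponentially, with the one-step expected number of candidates $\asymp M_k^{(d/\alpha)(1-a(\gamma-1))}$ pinpointing $\gamma<2$ as the threshold via the nonemptiness of $(1,1/(\gamma-1))$; your exponent computation, the $\exp\{-c''M_k^{\delta}\}$ failure bound, and the exploration bookkeeping (fresh vertices dominate $B_k$ because $\sum_{j<k}|B_j|=o(|B_k|)$, and no edge incident to $v_k$ other than $(v_{k-1},v_k)$ has been revealed) are all sound. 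The comparison is instructive in both directions. Your construction is essentially the mechanism behind the paper's own proof of the $\log\log$ \emph{distance} upper bound (Theorem \ref{thm-log-dist-infvar}), repurposed for percolation; it works verbatim for every fixed $\lambda>0$ and, notably, in every dimension $d\geq 1$, whereas the paper's reduction to a nearest-neighbor block model plus Theorem \ref{th:LSS} is stated for $d\geq 2$ and cannot conclude in $d=1$ without modifying the block connectivity structure. On the other hand, the paper's renormalization handles all $\gamma<2$ (including $\gamma\leq 1$) in one stroke and yields a positive density of good blocks rather than a single thin path; you dispose of $\gamma\leq 1$ separately via Theorem \ref{thm:div_deg}, which is legitimate and matches the remark at the start of Section \ref{sec-perc}. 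One cosmetic point: for $\gamma\le1$ you should conclude $\theta(\lambda)\geq\prob(W_0>0)>0$ rather than $\theta(\lambda)=1$ unless you use that \refeq{[1-F]-infvar-lb} forces $\prob(W_0>0)=1$; either way the conclusion $\lambda_c=0$ stands.
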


\proof Clearly, it suffices to show that $\theta(\lambda)>0$ as soon as $\lambda>0$. To this end, take a radius $r_{\lambda}=\lfloor \lambda^{-q}\rfloor$ for some $q>0$ to be determined later on.
Let $B(x,r)=\{y\colon |y-x|\leq r\}$ denote the Euclidean ball of radius $r$ around $x$,
and write $B(r)=B(0,r)$. Furthermore, for $x\in \Zbold^d$, define
    $$
    M_x(\lambda)=r_{\lambda}^{-d/(\tau-1)}\max_{y\in \Zbold^d\cap B(r_{\lambda}x, r_\lambda)} W_y.
    $$
Then, for small $\lambda>0$, we have
$$
\prob(M_x(\lambda)\geq \vep)=1-F\big(\vep r_{\lambda}^{d/(\tau-1)}\big)^{(2r_{\lambda}+1)^d}
\geq 1-\big(1-c\vep^{-(\tau-1)} r_{\lambda}^{d}\big)^{(2r_{\lambda}+1)^d}
\geq 1-\e^{-c\vep^{1-\tau}}.
$$
Hence $\prob(M_x(\lambda)\geq \vep)\to 1$ uniformly in $\lambda$ as $\vep\to 0$. Say that $x\in \Zbold^d$ is \emph{good} when $M_{x}(\lambda)\geq \vep$. The events that different sites are good are independent and have the same probability. Given two nearest-neighbors $x,y\in \Zbold^d$, we say that $(x,y)$ is $\lambda$-occupied when there is a direct edge between $x(\lambda)$ and $y(\lambda)$, where $x(\lambda)$ is the vertex that maximizes $W_z$ for $z\in B(r_{\lambda}x,r_{\lambda})$ and $y(\lambda)$ the vertex that maximizes $W_z$ for $z\in B(r_{\lambda}y,r_{\lambda})$. Then, when $x,y$ are both good,
    \begin{eqnarray*}
    \prob((x,y) \text{ $\lambda$-occupied}\mid x,y \text{ good})
    & =& \expec\Big[p_{x(\lambda),y(\lambda)}\mid x,y \text{ good}\Big]\\
    & = & \expec\Big[1-\e^{-\lambda W_{x(\lambda)}W_{y(\lambda)}/|x(\lambda)-y(\lambda)|^\alpha}\mid x,y \text{ good}\Big]\nn\\
    &\geq & 1-\e^{-\lambda (\vep r_{\lambda}^{d/(\tau-1)})^2/|x(\lambda)-y(\lambda)|^\alpha}\nn\\
    &\geq & 1-\e^{-\lambda \vep^2 r_{\lambda}^{2d/(\tau-1)-\alpha}}.\nn
    \end{eqnarray*}
Recall that $r_{\lambda}=\lfloor \lambda^{-q}\rfloor$. Hence,
    $$
    \lambda r_{\lambda}^{2d/(\tau-1)-\alpha}
    \geq \lambda^{1-q(2d/(\tau-1)-\alpha)}.
    $$
By assumption, $\gamma<2$, so that $\alpha(\tau-1)<2d$. Thus, we can take $q>1/(2d/(\tau-1)-\alpha)$, so that $\lambda^{1-q(2d/(\tau-1)-\alpha)} \rightarrow \infty$ when $\lambda\downarrow 0$. Then, for every $\vep>0$, we have
    $$
    \lim_{\lambda\downarrow 0}
    \prob((x,y) \text{ $\lambda$-occupied}\mid x,y \text{ good})=1.
    $$
The limit also holds for several edges simultaneously.

We now define a nearest-neighbor bond percolation model on $\Zbold^d$, where the bond between nearest-neighbor sites $x,y\in \Zbold^d$ is open when both $x$ and $y$ are good and there is a direct edge between $x(\lambda)$ and $y(\lambda)$, that is, when $(x,y)$ is $\lambda$-occupied.
The probability that a vertex is good can be made as close to 1 as we wish by taking $\vep>0$ small, and the edge probability can then be made as close to 1 as we like by taking $\lambda$
sufficiently small. Thus, by applying Theorem \ref{th:LSS} as in the proof of Theorem \ref{th:finite}(a), we conclude that the model will percolate with probability 1 when $\vep$ and $\lambda$ are sufficiently small. Denote by $\theta(\lambda,\vep)$ the probability that 0 percolates in the above bond percolation model. Note that 0 percolates in our original inhomogeneous model when (a) 0 percolates in the bond model; and (b) $0$ is directly connected to $0(\lambda)$, which is the vertex that maximizes $W_z$ in $B(0,r_{\lambda})$. Now, the probability that $0$ is directly connected to $0(\lambda)$, conditionally on $0$ being good, is at least
$\prob(W\geq \vep)[1-\e^{-\vep^2 \lambda r_{\lambda}^{d/(\tau-1)-\alpha}}]$.
Therefore,
    $$
    \theta(\lambda)\geq
    \prob(W\geq \vep)\big[1-\e^{-\vep^2 \lambda r_{\lambda}^{d/(\tau-1)-\alpha}}\big]
    \theta(\lambda,\vep)
    >0.
    $$
As a result, $\lambda_c\leq \lambda$, which holds for every $\lambda>0$, so that $\lambda_c=0$.
\qed
\medskip

\section{Distances}
\label{sec-dist}
For $x\in\Zbold^d$, write $d(0,x)$ for the graph distance between 0 and $x$, that is, $d(0,x)$ is the minimum number of edges that form a path from 0 to $x$. If 0 and $x$ are not connected, then we define $d(0,x)=\infty$. In this section we show that, conditionally on $0$ and $x$ being connected, the distance $d(0,x)$ is of order $\log\log|x|$ as $|x|\to\infty$ when the degrees have infinite variance and $\lambda>\lambda_c$. When the degrees have a finite variance on the other hand, then $d(0,x)$ is at least of order $\log|x|$ when $\alpha>d$ and at least of order $|x|^{\vep}$ for some $\vep>0$ when $\alpha>2d$.

\subsection{Doubly logarithmic asymptotics for infinite-variance degrees}

We start by proving a loglog upper bound when the degrees have infinite variance.
Recall that $x\conn y$ denotes the event that $x$ and $y$ are in the same component.

\begin{Theorem}[Doubly logarithmic upper bound on distances for infinite-variance degrees]
\label{thm-log-dist-infvar}
Assume that there exists $\tau>1$ and $c>0$
such that \refeq{[1-F]-infvar-lb} holds and such that
$\gamma=\alpha(\tau-1)/d\in (1,2)$. Finally, assume that $\lambda>0$.
Then, for every $\eta>0$, we have
    $$
    \lim_{|x|\rightarrow \infty} \prob\Big(d(0,x)\leq (1+\eta)\frac{2\log\log{|x|}}{|\log(\gamma-1)|}\Big|\,0\conn x\Big)=1.
    $$
\end{Theorem}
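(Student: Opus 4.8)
The plan is to build an explicit short path from a heavy-weight vertex near $0$ to a heavy-weight vertex near $x$ by a greedy hopping argument that exploits the infinite-variance tails, exactly as in the non-spatial inhomogeneous graph literature. The first step is to reduce to the following local statement: there is a constant $K$ so that, with probability bounded away from $0$ uniformly in the location of the target, any vertex $y$ is connected by a path of length at most $K$ to a vertex $z$ with $W_z \geq N$ for any fixed large $N$; and moreover, from a vertex of weight $\geq N$ one can in a \emph{single} step reach a vertex of weight $\geq N^{\gamma-1-o(1)}$ whenever that vertex lies in a suitable ball. This is where the exponent $|\log(\gamma-1)|$ enters: iterating the map $N \mapsto N^{\gamma-1}$ (note $\gamma - 1 \in (0,1)$, so this \emph{decreases}; the correct direction is $N \mapsto N^{1/(\gamma-1)}$, which blows up doubly exponentially) shows that after $k$ hops one reaches weight of order $\exp\{(1/(\gamma-1))^k\}$, and to have two such growing sequences (one rooted near $0$, one near $x$) meet in the middle we need $k$ roughly $\log\log|x| / |\log(\gamma-1)|$, with the factor $2$ coming from building the path from both ends.

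Concretely, I would proceed as follows. Fix $\delta>0$ small. Using \refeq{[1-F]-infvar-lb}, inside any box of side length $R$ around a given point there are, with high probability, order $R^d \cdot R^{-(\tau-1)d/\alpha \cdot \text{(scale)}}$ vertices of weight at least a prescribed threshold; choosing the threshold as a power of $R$ makes this a positive power of $R$. Given a vertex $v$ with weight $W_v = w$ large, consider the annulus of radii comparable to $w^{1/\alpha}$ around $v$: the edge from $v$ to a vertex $u$ in this annulus with $W_u = w'$ is present with probability $1-\e^{-\lambda w w'/|u-v|^\alpha} \geq 1-\e^{-c\lambda w'}$, which is close to $1$ once $w'$ is moderately large. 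There are of order $w^{d/\alpha}$ vertices in this annulus, and by the tail bound a positive fraction of a positive power of them have weight at least $(w^{d/\alpha})^{\alpha/((\tau-1)d)} = w^{1/(\tau-1) \cdot \alpha/\alpha}$... more carefully, of order $w^{d/\alpha \cdot (1 - (\tau-1)/\text{?})}$ have weight $\geq w^{1/(\tau-1) \cdot (\text{fraction})}$; optimizing, one can reach a vertex of weight $\geq w^{\rho}$ for any $\rho < 1/(\gamma-1)$, connected directly to $v$ with probability $\to 1$. Iterating this $k$ times from a seed vertex of weight $\geq N$ produces, with probability bounded below, a path of length $k$ ending at a vertex of weight $\geq N^{\rho^k}$ located within distance $O(N^{\rho^{k}/\alpha})$ of the seed. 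Running this from a good vertex near $0$ and from a good vertex near $x$, after $k \sim (1+\eta/2)\log\log|x|/|\log(\gamma-1)|$ steps each endpoint has weight at least $\exp\{(\log|x|)^{1-o(1)}\} \gg |x|^{\alpha}$, so the two endpoints are within each other's "connection range" and the edge joining them is present with probability $\to 1$; this yields a path of total length $2k(1+o(1))$, giving the claimed bound.

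The remaining ingredient is the start of the path: we must get from $0$ (conditioned to be in the infinite cluster) to such a seed vertex of bounded-below weight in $O(1)$ steps with probability bounded away from $0$. Since $\lambda>0$ and $\gamma<2$, Theorem \ref{thm-crit-value-zero} gives $\lambda_c = 0$, hence there \emph{is} an infinite cluster; conditioning on $0\conn x$, both $0$ and $x$ lie in it, and a standard argument (the infinite cluster is unique and the expected number of neighbors of positive weight is positive) shows $0$ reaches a vertex of weight $\geq \vep$ in a bounded number of steps with positive conditional probability — and in fact, by a second-moment / sprinkling argument one upgrades "positive probability" to "probability tending to $1$" by allowing the number of initial steps to grow slowly, which is absorbed into the $\eta$. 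The main obstacle is making the greedy hopping rigorous: one must control the \emph{conditional} law of the weights and edges in the next annulus given the path so far (the weights already examined must be excised, but since each annulus is at a geometrically increasing scale the overlap is negligible), and one must ensure the target weights are attained by vertices that have not yet been used — this is handled by the rapid (doubly-exponential) growth of the relevant length scales, so the construction never revisits a region. Quantifying the error probabilities so that they are summable over the $k = O(\log\log|x|)$ hops, uniformly as $|x|\to\infty$, is the technical heart of the argument.
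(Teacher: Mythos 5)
Your greedy-hopping scheme is a viable route to the theorem and lands on the correct constant, but the paper does something structurally simpler that is worth contrasting. Instead of an adaptive walk whose next annulus is centered at the current vertex with a weight-dependent radius, the paper fixes \emph{in advance} a deterministic hierarchy of pairwise disjoint balls $\widetilde{B}(x,b^i)$ of radius $|x|^{b^i}/4$ placed along the segment from $0$ to $x$, with $b\in(\gamma-1,1)$, and takes $z_i$ to be the maximal-weight vertex in the $i$-th ball. Since that ball contains order $|x|^{db^i}$ points, $W_{z_i}\geq |x|^{db^i(1-\delta)/(\tau-1)}$ up to a stretched-exponentially small failure probability, and a union bound shows that every consecutive pair $(z_i,z_{i+1})$ is directly joined by an edge whp, precisely because $b>\gamma-1$ makes $\lambda W_{z_i}W_{z_{i+1}}/|z_i-z_{i+1}|^{\alpha}$ a positive power of $|x|^{b^i}$. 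Because the balls are disjoint and chosen without looking at the configuration, the conditioning problem that you correctly single out as ``the technical heart'' of your argument simply does not arise: the $z_i$ are functions of weights in disjoint regions, and the edges are conditionally independent given all the weights. The two chains (one descending toward $0$, one toward $x$) share their apex in the middle ball, which is where the factor $2$ comes from, and the $O(1)$-scale connections near $0$ and near $x$ are handled, as you suggest, via uniqueness of the infinite cluster and absorbed into $\eta$.

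One computational slip you should fix if you pursue your adaptive version: the annulus of radius comparable to $w^{1/\alpha}$ around a vertex of weight $w$ is the wrong scale. It contains $\asymp w^{d/\alpha}$ points, whose maximal weight is only $\asymp w^{d/(\alpha(\tau-1))}=w^{1/\gamma}$, and since $\gamma>1$ this iteration \emph{contracts} rather than grows. The correct choice is the largest radius $R$ for which $wR^{d/(\tau-1)-\alpha}\to\infty$, namely $R\approx w^{1/(\alpha-d/(\tau-1))}=w^{(\tau-1)/(d(\gamma-1))}$, which yields a maximal weight $R^{d/(\tau-1)}=w^{1/(\gamma-1)}$ reachable in one hop; this is exactly where $\gamma<2$ (equivalently $1/(\gamma-1)>1$) enters to make the weights blow up doubly exponentially. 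Your final claimed exponent $\rho<1/(\gamma-1)$ is right, but the displayed intermediate computation does not produce it.
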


\proof
Throughout this proof $c_1,c_2,\ldots$ denote strictly positive constants. Let $(W_i)_{i=1}^n$ be an i.i.d.\ collection of weight variables. When the weight distribution $F$ satisfies \eqref{[1-F]-infvar-lb}, it is easy to see that, for any $\delta\in(0,1)$, we can bound
    \begin{equation}\label{eq:maxw_bd}
    \prob\Big(\max_{1\leq i\leq n}W_i\leq n^{(1-\delta)/(\tau-1)}\Big)\leq \left(1-\frac{c}{n^{1-\delta}}\right)^n\leq \e^{-cn^\delta}.
    \end{equation}
Take $x\in\Zbold^d$ with $|x|$ large and let $b\in(0,1)$ be a constant whose value will be specified later. For $i=0,1,2\ldots$, write $\widetilde{B}(x,b^i)$ for the ball with radius $|x|^{b^i}/4$ centered at the point at distance $|x|^{b^i}/2$ from 0 on the line segment from 0 to $x$. Furthermore, let $z_i\in\Zbold^d$ be the (random) vertex in $\widetilde{B}(x,b^i)$ with maximal weight. We have
    $$
    \prob\left(\cup_{i=0}^{k-1} \{(z_i,z_{i+1})\mbox{ not occupied}\}\right)\leq \sum_{i=0}^{k-1}\E\left[\e^{-\lambda W_{z_i}W_{z_{i+1}}/|z_i-z_{i+1}|^\alpha}\right].
    $$
Using (\ref{eq:maxw_bd}) and the fact that the number of points in $\Zbold^d \cap \widetilde{B}(x,b^i)$ is of the order $|x|^{db^i}$, this can be bounded by
$$
\sum_{i=0}^{k-1}\left(\exp\left\{-\frac{c_1|x|^{db^i(\tau-1)^{-1}(1-\delta)}|x|^{db^{i+1}(\tau-1)^{-1}(1-\delta)}}{|x|^{\alpha b^i}}\right\}+\exp\left\{-c|x|^{\delta db^i}\right\} \right),
$$
where the exponent in the first term simplifies to
$$
-c_1|x|^{b^i[d(\tau-1)^{-1}(1-\delta)(1+b)-\alpha]}.
$$
Fix $b\in(\gamma-1,1)$, and then take $\delta$ small so that $d(\tau-1)^{-1}(1-\delta)(1+b)-\alpha>0$.
Using the fact that $\sum_{i=1}^{k-1}\e^{-|x|^{b^i}}
=\Theta\big(\e^{-|x|^{b^{k-1}}}\big)$ as $k\to\infty$, we can hence bound
$$
\prob\Big(\cup_{i=0}^{k-1} \{(z_i,z_{i+1})\mbox{ not occupied}\}\Big)\leq c_2\exp\left\{-c_3\big(|x|^{b^{k-1}}\big)^{c_4}\right\}
$$
for large $k$.

Now fix $\vep>0$. Take $A=A(\vep)$ large so that $c_2\e^{-c_3A^{c_4}}\leq \vep$ and then choose $k$ such that $|x|^{b^k}= A$, that is,
$$
k=\frac{\log\log|x|+\log\log A}{|\log b|}.
$$
Then $\prob\big((z_i,z_{i+1})\mbox{ occupied for all }i=1,\ldots,k-1 \big)\geq 1-\vep$ and, on the event that all $(z_i,z_{i+1})$ are occupied, we have that
$$
d(0,x)\leq k+d(0,z_k),
$$
where $|z_k|\leq|x|^{b^k}\leq A$. We now let $|x|\to \infty$ and use the fact that the model has a {\it unique} infinite cluster. If $0\conn x$ and all $(z_i,z_{i+1})$ are occupied, then $0\conn z_k$, which in turn implies that $d(0,z_k)<\infty$ almost surely (since $|z_k|\leq A$). Hence, on the event that $0\conn x$, for any $\kappa>0$, we have that $\prob(d(0,z_k)\leq \kappa\log\log|x|)\geq 1-\vep$ when $k$ is large. It follows that
$$
\lim_{|x|\to\infty}\prob\left(d(0,x)\leq \frac{2(1+\eta/2)\log\log|x|}{|\log b|}\,\Big|\,0\conn x\right)\geq 1-2\vep.
$$
The proof is completed by taking $b$ close enough to $\gamma-1$ to ensure that
    $$
    \frac{1+\eta/2}{|\log b|}\leq \frac{1+\eta}{|\log(\gamma-1)|}.
    $$
\qed
\medskip

We continue by proving that, for $\gamma\in (1,2)$, typical distances really are of the order
$\log\log{|x|}$.

\begin{Theorem}[Doubly logarithmic lower bound on distances for infinite-variance degrees]
\label{thm-log-dist-infvar-low}
Assume that there exists $\tau>1$ and $c>0$
such that \refeq{[1-F]-infvar-lb} holds and such that
$\gamma=\alpha(\tau-1)/d\in (1,2)$. Finally, assume that $\lambda>0$.
Then, for every $\eta>0$,
    $$
    \lim_{|x|\rightarrow \infty} \prob\Big(d(0,x)\geq (1-\eta)\frac{2\log\log{|x|}}{|\log(\kappa)|}\Big)=1,
    $$
where $\kappa=\gamma-1$ when $\tau\in (1,2]$ and $\kappa=\alpha/d-1$ when $\tau>2$.
\end{Theorem}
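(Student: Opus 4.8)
I would prove this by controlling the growth of the graph‑distance balls $\mathcal B_k(v):=\{y\colon d(v,y)\le k\}$ around both $v=0$ and $v=x$, and then gluing the two bounds at the midpoint of a geodesic. (No conditioning on $0\conn x$ is needed: if $0$ and $x$ are in different components then $d(0,x)=\infty$ and the bound is trivial.) Fix $\eta>0$, pick $\eps=\eps(\eta,\kappa)>0$ small, set $r_0=r_0(|x|)\to\infty$ slowly (say $r_0=\log\log|x|$) and $r_k:=r_0^{(1/\kappa+\eps)^k}$, so that $\log\log r_k\approx k\log(1/\kappa+\eps)$; the least $n_0$ with $r_{n_0}\ge|x|$ then satisfies $n_0\approx\log\log|x|/\log(1/\kappa+\eps)\ge(1-\eta)\log\log|x|/|\log\kappa|$ for $\eps$ small (using $\kappa\in(0,1)$). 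The plan is: (i) show that with probability $\to1$ one has $\mathcal B_k(0)\subseteq B(0,r_k)$ and $\mathcal B_k(x)\subseteq B(x,r_k)$ for all $k\le n_0$; (ii) on that event, if $d(0,x)=n\le 2n_0$, take the midpoint $y_m$, $m=\lfloor n/2\rfloor$, of a geodesic $0=y_0,\dots,y_n=x$: then $y_m\in\mathcal B_m(0)\cap\mathcal B_{n-m}(x)$, so $|x|\le|y_m|+|x-y_m|\le r_m+r_{n-m}\le 2r_{n_0}<|x|$, a contradiction. Hence $d(0,x)>2n_0\ge(1-\eta)\,2\log\log|x|/|\log\kappa|$, which is the claim; the factor $2$ is exactly this ``meet in the middle''.

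The heart is the inductive step for (i): $\mathcal B_{k-1}(0)\subseteq B(0,r_{k-1})$ should imply $\mathcal B_k(0)\subseteq B(0,r_k)$ up to a failure probability whose sum over $k\le n_0$ tends to $0$. Since $\mathcal B_k(0)\subseteq\bigcup_{u\in\mathcal B_{k-1}(0)}N(u)$, a union bound over pairs gives
\[
\prob\big(\exists\,u\in B(0,r_{k-1}),\ v\notin B(0,r_k)\colon (u,v)\text{ occupied}\big)\ \le\ \sum_{|u|\le r_{k-1}}\ \sum_{|v|>r_k}\E[p_{uv}].
\]
The exponent $\kappa$ and the dichotomy $\tau>2$ versus $\tau\le2$ enter through the bound on $\E[p_{uv}]$. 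If $\tau>2$ the weights have finite mean, so $p_{uv}\le\lambda W_uW_v|u-v|^{-\alpha}$ together with independence yields $\E[p_{uv}]\le\lambda\E[W]^2|u-v|^{-\alpha}$; since $|u-v|\asymp|v|$ and $\alpha>d$, the double sum is $O\!\big(r_{k-1}^d r_k^{d-\alpha}\big)$, negligible once $r_k\gg r_{k-1}^{d/(\alpha-d)}$, i.e. with base $1/\kappa=d/(\alpha-d)$, so $\kappa=\alpha/d-1$. If $\tau\le2$ the mean is infinite and one must keep the truncation, $p_{uv}\le1\wedge\lambda W_uW_v|u-v|^{-\alpha}$; estimating $\E[p_{uv}]$ through the tail of the product $W_uW_v$ exactly as in \refeq{1-G-bd}/Lemma~\ref{lem-g-bd} gives $\E[p_{uv}]\le C(1+\log|u-v|)\,|u-v|^{-\alpha(\tau-1)}$, so the double sum is $O\!\big(r_{k-1}^d r_k^{d-\alpha(\tau-1)}\log r_k\big)$, negligible once $r_k\gg r_{k-1}^{1/(\gamma-1)}$ (using $\gamma=\alpha(\tau-1)/d>1$), so $\kappa=\gamma-1$. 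With the chosen $(r_k)$ the per‑step failure probability behaves like $r_0^{-c\,(1/\kappa+\eps)^{k-1}}\,\mathrm{polylog}(r_k)$, so the sum over $k\le n_0$ is dominated by its $k=1$ term and tends to $0$ as $r_0\to\infty$; the balls around $x$ are identical by translation invariance.

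The main obstacle is extracting the \emph{sharp} base $\kappa$: the recursion must be run with exponent as close to $1/\kappa$ as the estimates permit (hence $r_k=r_{k-1}^{1/\kappa+\eps}$ with $\eps\downarrow0$ at the end, which is precisely what produces the $(1-\eta)$ in the statement), and in the regime $\tau>2$ the crude bound $\E[p_{uv}]\le\lambda\E[W]^2|u-v|^{-\alpha}$ is the best this first‑moment method can give — this is why the exponent degrades to $\alpha/d-1$ there, presumably suboptimal, rather than the $\gamma-1$ obtained for $\tau\le2$ (where the bound matches the upper bound of Theorem~\ref{thm-log-dist-infvar} and thus pins down the order $\log\log|x|$). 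A further point requiring care is that upper‑bounding $\E[p_{uv}]$ uses control of the \emph{upper} tail of $W$ (the regular variation \refeq{weight-distr}), not merely the lower bound \refeq{[1-F]-infvar-lb}; and one should check that the midpoint of a geodesic is genuinely within $\lceil n/2\rceil$ steps of both $0$ and $x$, which is immediate from the triangle inequality along the geodesic.
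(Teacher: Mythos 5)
Your proposal is correct and follows essentially the same route as the paper: the paper works with $S_n(0)=\sup\{|y|\colon d(0,y)\le n\}$, splits the event $\{d(0,x)\le 2n\}$ at the midpoint exactly as you do, and runs the same first-moment recursion $\prob(S_n(0)\ge t)\le\prob(S_{n-1}(0)\ge s)+\sum_{|u|\le s,\,|v|\ge t}\E[p_{uv}]$ with $s=t^{\kappa-\delta}$, using the bound $\E[p_{uv}]\le C(1+\log|u-v|)\,|u-v|^{-\alpha((\tau-1)\wedge 1)}$ from Lemma \ref{lem-g-bd}, which is precisely your two-case ($\tau\le 2$ versus $\tau>2$) estimate and yields the same $\kappa$. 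Your observation that this step requires control of the \emph{upper} tail of $W$, not merely \refeq{[1-F]-infvar-lb}, is well taken: the paper's proof also quietly relies on Lemma \ref{lem-g-bd}, whose hypothesis is the upper bound \refeq{[1-F]-infvar-ub}.
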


When $\tau\in (1,2]$, we see that the constants in Theorems \ref{thm-log-dist-infvar}--\ref{thm-log-dist-infvar-low}
agree, so that we obtain convergence in probability:

\begin{Corollary}[Doubly logarithmic distances for infinite-variance degrees and infinite-mean weights]
\label{cor-loglog-dist}
Assume that there exists $\tau\in(1,2]$ and $c>0$
such that \refeq{weight-distr} holds and such that
$\gamma=\alpha(\tau-1)/d\in (1,2)$. Finally, assume that $\lambda>0$.
Then, conditionally on $0\conn x$, we have for every $\eta>0$ that
    $$
    \frac{d(0,x)}{\log\log{|x|}}\convp \frac{2}{|\log(\gamma-1)|}.
    $$
\end{Corollary}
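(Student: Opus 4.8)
The plan is to read the Corollary off directly from Theorems~\ref{thm-log-dist-infvar} and~\ref{thm-log-dist-infvar-low}; the only points needing attention are matching the constants and upgrading the (unconditional) lower bound to a conditional one. For the constants: when \refeq{weight-distr} holds, for every $\delta>0$ one has $[1-F](x)\ge x^{-(\tau-1+\delta)}$ for all large $x$, so the hypothesis \refeq{[1-F]-infvar-lb} of Theorems~\ref{thm-log-dist-infvar} and~\ref{thm-log-dist-infvar-low} is met with $\tau$ replaced by $\tau'=\tau+\delta$ (that bound is only used for large $x$ in those proofs), and for $\delta$ small $\gamma'=\alpha(\tau'-1)/d\in(1,2)$; since $\gamma'-1\to\gamma-1$ and also $\alpha/d-1=\gamma-1$ when $\tau=2$, the constants appearing in the two theorems both converge to $\gamma-1$ as $\delta\downarrow 0$, which only perturbs the multiplicative error $\eta$.

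Granting this, Theorem~\ref{thm-log-dist-infvar} gives for every $\eta>0$ that $\prob\big(d(0,x)\le(1+\eta)\tfrac{2\log\log|x|}{|\log(\gamma-1)|}\,\big|\,0\conn x\big)\to 1$, while Theorem~\ref{thm-log-dist-infvar-low}, since $\kappa=\gamma-1$ when $\tau\in(1,2]$, gives the \emph{unconditional} bound $\prob\big(d(0,x)\ge(1-\eta)\tfrac{2\log\log|x|}{|\log(\gamma-1)|}\big)\to 1$. To make the latter conditional I would bound $\prob(0\conn x)$ away from $0$ uniformly in $x$: since $\gamma\in(1,2)$, Theorem~\ref{thm-crit-value-zero} gives $\theta(\lambda)>0$; if $0$ and $x$ both belong to the a.s.\ unique infinite component then $0\conn x$, and the events $\{0\conn\infty\}$, $\{x\conn\infty\}$ are increasing in the natural i.i.d.\ representation of the model (independent uniforms for the weights and for the edges), so Harris--FKG yields $\prob(0\conn x)\ge\theta(\lambda)^2>0$. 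Consequently, for any event $A$ with $\prob(A)\to 0$ we get $\prob(A\mid 0\conn x)\le\prob(A)/\theta(\lambda)^2\to 0$, and applying this to the complement of the event in Theorem~\ref{thm-log-dist-infvar-low} turns that into a conditional lower bound.

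Intersecting the conditional upper and lower bounds then gives, for every $\eta>0$,
$$
\prob\Big((1-\eta)\tfrac{2}{|\log(\gamma-1)|}\le\frac{d(0,x)}{\log\log|x|}\le(1+\eta)\tfrac{2}{|\log(\gamma-1)|}\;\Big|\;0\conn x\Big)\to 1,
$$
which is exactly the asserted convergence $d(0,x)/\log\log|x|\convp 2/|\log(\gamma-1)|$ conditionally on $0\conn x$. I do not anticipate a genuine obstacle: all of the substantive work is carried by Theorems~\ref{thm-log-dist-infvar} and~\ref{thm-log-dist-infvar-low}, and what remains is the two bookkeeping steps above.
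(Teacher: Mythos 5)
Your proposal is correct and follows essentially the same route as the paper: Potter's theorem (regular variation) is used to convert \refeq{weight-distr} into the power-law tail bounds required by Theorems \ref{thm-log-dist-infvar} and \ref{thm-log-dist-infvar-low}, the perturbation of $\tau$ only shifts the constant by an amount absorbed into $\eta$, and the two bounds are matched using $\kappa=\gamma-1$ for $\tau\in(1,2]$. The only difference is that you explicitly carry out the upgrade of the unconditional lower bound to a conditional one via $\prob(0\conn x)\geq\theta(\lambda)^2>0$ (Harris--FKG plus uniqueness of the infinite cluster), a step the paper leaves implicit; your argument for it is valid.
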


\proof By Potter's Theorem (\cite{BGT}), for every $\vep>0$, there exist constants $c_{\vep}$ and $C_{\vep}$ such that
for all $x$ sufficiently large,
    \eqn{
    c_{\vep} w^{-(\tau-1+\vep)}\leq [1-F](w)\leq C_{\vep} w^{-(\tau-1-\vep)}.
    }
The result follows from Theorems \ref{thm-log-dist-infvar}--\ref{thm-log-dist-infvar-low} and
the fact that $\kappa=\gamma-1$ for $\tau\in (1,2]$.
\qed

\noindent
{\it Proof of Theorem \ref{thm-log-dist-infvar-low}.} Define
    \eqn{
    S_n(x)=\sup\{|x-y|\colon d(x,y)\leq n\},
    }
to be the distance between $x$ and the furthest point $y\in \mathbb{Z}^d$ that can be reached via
at most $n$ edges. Then, clearly,
    \eqn{
    \lbeq{bd-d-Sn}
    \prob(d(0,x)\leq 2n)
    \leq \prob(S_n(x)\geq |x|/2)+\prob(S_n(0)\geq |x|/2)
    =2\prob(S_n(0)\geq |x|/2),
    }
where the last equality follows from translation invariance. Now,
for any $s\leq t$, we obtain the recursive bound
    \eqn{
    \lbeq{recur-Sn}
    \prob(S_n(0)\geq t)\leq \prob(S_{n-1}(0)\geq s)+
    \prob(S_{n-1}(0)<s, S_n(0)\geq t),
    }
where, using Boole's inequality, we can further bound
    \eqan{
    \lbeq{Sn-recur-bd}
    \prob(S_{n-1}(0)<s, S_n(0)\geq t)
    &=
    \prob(\exists u,v, \,\mbox{satisfying: $|u|\leq s$ and $|v|\geq t$ such that}\,\, u \conn v)\nn\\
    &\leq
    \sum_{u,v\colon |u|\leq s, |v|\geq t} \expec[p_{u,v}]
    \leq \sum_{u,v\colon |u|\leq s, |v|\geq t} \expec\Big[\Big(\frac{\lambda W_uW_v}{|u-v|^{\alpha}}\wedge 1\Big)\Big]
    \nn\\
    &\leq \sum_{u,v\colon |u|\leq s, |v|\geq t} g_1\Big(\frac{|u-v|^{\alpha}}{\lambda}\Big),
    }
where
    \eqn{
    g_1(u)=\expec\Big[\Big(\frac{W_1W_2}{u}\wedge 1\Big)\Big].
    }
It follows quite easily from the statement in Lemma \ref{lem-g-bd}, that there exists a
constant $C>0$ such that
    \eqn{
    \lbeq{g1-bd}
    g_1(u)\leq C (1+\log{u}) u^{-((\tau-1)\wedge 1)}.
    }
By a computation, similar to the one in the proof of Proposition \ref{prop:exp_deg_bd}
we find that
\begin{eqnarray*}
    \prob(S_{n-1}(0)<s, S_n(0)\geq t)
    &\leq& C\sum_{u,v\colon |u|\leq s, |v|\geq t}  |u-v|^{-\alpha((\tau-1)\wedge 1)}
    (1+\log{|u-v|^{\alpha}/\lambda}) \\
    &\leq& K |s|^d |t|^{d-\alpha [(\tau-1)\wedge 1]+\eta},
\end{eqnarray*}
where $\eta>0$ may be taken arbitrarily small and compensates the $\log$-term. Recall the
definition of $\kappa$ in the theorem.
Fix $A\geq 1$ to be large, and take $\delta>0$ so  that $\kappa-\delta\in (0,1)$.
Then take $t=A^{(\kappa-\delta)^{-n}}$ and $s=A^{(\kappa-\delta)^{-(n-1)}}$, so that $s=t^{\kappa-\delta}$, and
    \eqn{
    K |s|^d |t|^{d-\alpha [(\tau-1)\wedge 1]+\eta}
    =K |t|^{d-\alpha [(\tau-1)\wedge 1]+(\kappa-\delta)d}
    =K \Big(A^{(\kappa-\delta)^{-n}}\Big)^{-\zeta},
    }
with
    \eqn{
    \zeta=\alpha [(\tau-1)\wedge 1]-d-(\kappa-\delta)d-\eta>0,
    }
since $\kappa=\alpha[(\tau-1)\wedge 1]/d-1$.
Combining \eqref{recur-Sn} with these bounds yields the explicit recursion
\eqn{
\prob\left(S_n(0)\geq A^{(\kappa-\delta)^{-n}}\right)
\leq \prob\left(S_{n-1}(0)\geq A^{(\kappa-\delta)^{-(n-1)}}\right)+
    K \Big(A^{(\kappa-\delta)^{-n}}\Big)^{-\zeta}.
}
As a result we obtain that
    \eqn{
        \prob\left(S_n(0)\geq A^{(\kappa-\delta)^{-n}}\right)
    \leq \prob\left(S_1(0)\geq A^{1/(\kappa-\delta)}\right)+\sum_{k=2}^n K A^{-\zeta (\kappa-\delta)^{-k}},
    }
which can be made small by choosing $A\geq 1$ large enough.

Finally, by \refeq{bd-d-Sn} and when $n=(1-\eta)\frac{\log\log{|x|}}{|\log(\kappa)|}$,
then $A^{(\kappa-\delta)^{-n}}\leq |x|/2$, and we conclude that
$\prob(d(0,x)\leq 2n)=o(1)$.
\qed

\subsection{Lower bounds on distances for finite-variance degrees}
\label{sec-log-LB-dist}

We begin by establishing a general logarithmic lower bound valid for $\gamma>2$.

\begin{Theorem}[Logarithmic lower bound on distance for finite variance degrees]
\label{thm-log-dist-finvar}
Assume that there exists $\tau>1$
and $c>0$ such that \refeq{[1-F]-infvar-ub} holds and that $\gamma=\alpha(\tau-1)/d>2$.
Then, there exists an $\eta>0$ such that
    $$
    \lim_{|x|\rightarrow \infty}
    \prob(d(0,x)\geq \eta\log{|x|})=1.
    $$
\end{Theorem}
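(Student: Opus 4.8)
The plan is to run a first-moment path-counting argument in the spirit of the proof of Theorem~\ref{thm-crit-value-pos}, now exploiting the elementary geometric fact that any path reaching a \emph{far} vertex must contain at least one \emph{long} edge. If $d(0,x)\le n$, a geodesic supplies an occupied self-avoiding path of some length $k\le n$ joining $0$ to $x$, so by the union bound and Markov's inequality
$$
\prob(d(0,x)\le n)\le \sum_{k=1}^{n}\ \sum_{\substack{(x_1,\dots,x_{k-1})\ \mathrm{s.a.}\\ x_0=0,\ x_k=x}} \expec\Big[\prod_{i=1}^{k}p_{x_{i-1},x_i}\Big].
$$
Bounding $p_{x,y}\le (\lambda W_xW_y/|x-y|^{\alpha})\wedge 1$ and applying the Cauchy--Schwarz splitting into even- and odd-indexed edges exactly as in the proof of Theorem~\ref{thm-crit-value-pos} (legitimate because the path is self-avoiding, so the relevant weights are independent), each expectation is at most $\prod_{i=1}^{k}\widehat g(x_i-x_{i-1})$, where $\widehat g(y)=g(|y|^{\alpha}/\lambda)^{1/2}$ and $g(u)=\expec[(W_1W_2/u\wedge 1)^2]$. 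By Lemma~\ref{lem-g-bd} there is a constant $C$ with $\widehat g(y)\le C(1+\log|y|)^{1/2}|y|^{-\beta}$, where $\beta=\tfrac{\alpha}{2}\big((\tau-1)\wedge 2\big)$; moreover $\widehat g\le 1$ everywhere. Since $\alpha>d$ and $\gamma>2$, one checks directly that $\beta>d$, whence $\mu:=\sum_{y\in\Zbold^d}\widehat g(y)<\infty$ and $T_M:=\sum_{|y|\ge M}\widehat g(y)\le C'M^{-(\beta-d)/2}$ for $M$ large.

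Next I would discard self-avoidance (this only enlarges the sum) and pass to increments $a_i=x_i-x_{i-1}$, which satisfy $\sum_{i=1}^{k}a_i=x$ and therefore $\max_i|a_i|\ge|x|/k\ge|x|/n$. Splitting the sum according to which increment $a_j$ is the long one, and then releasing the constraint $\sum_i a_i=x$ on the remaining $k-1$ increments, decouples the sum and gives
$$
\sum_{\substack{(x_1,\dots,x_{k-1})\\ x_0=0,\ x_k=x}}\prod_{i=1}^{k}\widehat g(x_i-x_{i-1})\ \le\ \sum_{j=1}^{k}\mu^{\,k-1}\,T_{|x|/n}\ =\ k\,\mu^{\,k-1}\,T_{|x|/n}.
$$
Summing over $k\le n$ then yields $\prob(d(0,x)\le n)\le C''\,n^{2+(\beta-d)/2}\,\mu^{\,n}\,|x|^{-(\beta-d)/2}$ (the case $\mu\le1$ being only easier). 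Taking $n=\eta\log|x|$ turns the bound into $C''(\eta\log|x|)^{O(1)}\,|x|^{\,\eta\log\mu-(\beta-d)/2}$, which tends to $0$ as $|x|\to\infty$ provided $\eta$ is chosen small enough that $\eta\log\mu<(\beta-d)/2$ (any $\eta>0$ works when $\mu\le1$). Since $\prob(d(0,x)\ge \eta\log|x|)=1-\prob(d(0,x)<\eta\log|x|)$, this produces the required $\eta>0$.

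The Cauchy--Schwarz step and the asymptotics of $T_M$ are routine and already appear in Section~\ref{sec-pos}. The one point needing care — and the mild obstacle — is the combinatorial over-counting: after isolating a single long edge one must verify that releasing the ``sum equals $x$'' constraint on the other $k-1$ increments genuinely factorizes the sum into $\mu^{\,k-1}\,T_{|x|/n}$, and that the polylogarithmic corrections coming from Lemma~\ref{lem-g-bd} are harmless because the exponent of $|x|$ produced is strictly negative for small $\eta$. No uniformity in $\lambda$ is required, as $\lambda$ is fixed throughout and $\mu=\mu(\lambda)<\infty$.
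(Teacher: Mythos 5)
Your proposal is correct and follows essentially the same route as the paper: a first-moment count over self-avoiding paths, the Cauchy--Schwarz decoupling and Lemma \ref{lem-g-bd} from the proof of Theorem \ref{thm-crit-value-pos}, and the pigeonhole observation that one increment must have length at least $|x|/n$ (the paper packages this as the convolution bound $h^{*n}(x)\leq n(\sup_{|y|\geq|x|/n}h(y))(\sum_u h(u))^{n-1}$, while you bound the long increment by the tail sum $T_{|x|/n}$ after releasing the constraint, which is a marginally lossier but equally valid step since $\beta>d$). The exponent bookkeeping and the choice of $\eta$ small enough that $\eta\log\mu<(\beta-d)/2$ match the paper's conclusion.
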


\proof We follow the proof of Theorem \ref{thm-crit-value-pos},
and obtain
    \begin{equation}\label{eq:dn_bd}
    \prob(d(0,x)=n)
    \leq \sum_{(x_1,\ldots, x_{n-1})}\prod_{i=1}^n
    g(|x_{i-1}-x_i|^{\alpha}/\lambda)^{1/2},
    \end{equation}
where we adopt the convention that $x_0=0$ and $x_n=x$. Define
    $$
    h(x)=(\log{|x|}+1)|x|^{-\alpha\big((\tau-1)/2\wedge 1\big)},
    $$
for $x\neq 0$ and $h(0)=0$. Then, using the bound in Lemma \ref{lem-g-bd} and the fact that the sum in (\ref{eq:dn_bd}) acts like a convolution, the right-hand side of (\ref{eq:dn_bd}) can be bounded by
    $$
    \prob(d(0,x)=n) \leq \big(C\lambda^{(\tau-1)/2}\big)^n h^{*n}(x),
    $$
where $h^{*n}$ denotes the $n$-fold convolution of $h$ with itself. Now, it is easy to see that
    \eqn{
    \lbeq{conv-bd}
    h^{*n}(x)
    \leq n\Big(\sup_{y\colon |y|\geq |x|/n}h(y)\Big) \Big(\sum_{u\neq 0}
    h(u)\Big)^{n-1}.
    }
Indeed, to see \refeq{conv-bd}, we note that
    \eqn{
    h^{*n}(x)=\sum_{x_1+\cdots+x_n=x} \prod_{i=1}^n h(x_i).
    }
When $x_1+\cdots+x_n=x$, there must be (at least one) $x_i$ with $|x_i|\geq |x|/n$.
We bound that factor by $\sup_{y\colon |y|\geq |x|/n}h(y)$, and sum out over
the remaining $x_j$ for $j\neq i$, noting that that sum is now unrestricted.

When $n\leq \eta\log{|x|}$, we can define $\kappa>0$ such that
    $$
    \sup_{y\colon |y|\geq |x|/n} h(y)
    \leq C' (\log{|x|})^{\kappa} |x|^{-\alpha\big((\tau-1)/2\wedge 1\big)}.
    $$
Furthermore, $\sum_{u\neq 0}h(u)<\infty$ when $\gamma=\alpha(\tau-1)/d>2$. As a result, we obtain that
    $$
    \prob(d(0,x)=n) \leq n\big(C\lambda^{\big((\tau-1)/2\wedge 1\big)}\big)^n (\log{|x|})^{\kappa} |x|^{-\alpha\big((\tau-1)/2\wedge 1\big)},
    $$
which is bounded by $|x|^{-\varepsilon}$ when $n\leq \eta\log{|x|}$ with $\eta>0$ sufficiently small.
This is true for any $n\leq \eta\log{|x|}$, so
    $$
    \prob(d(0,x)\leq \eta \log{|x|})
    \leq |x|^{-\varepsilon},
    $$
and the proof of Theorem \ref{thm-log-dist-finvar} is completed.
\qed
\medskip

\noindent
We next improve the above result to a polynomial lower bound when $\alpha>2d$.

\begin{Theorem}[Polynomial lower bound on distance for finite variance degrees when $\alpha>2d$]
\label{thm-poly-dist-finvar}
 Assume that there exists $\tau>1$
and $c>0$ such that \refeq{[1-F]-infvar-ub} holds, that $\gamma=\alpha(\tau-1)/d>2$
and that $\alpha>2d$. Then, for every $\vep<d[(\gamma \wedge \alpha/d)-2]/(d+1)$, we have
    $$
    \lim_{|x|\rightarrow \infty}
    \prob(d(0,x)\geq |x|^{\vep})=1.
    $$
\end{Theorem}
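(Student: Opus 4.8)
The plan is to refine the path-counting/first-moment argument used for Theorem~\ref{thm-log-dist-finvar} by exploiting the extra geometric decay available when $\alpha>2d$. As there, the starting point is the bound
\[
\prob(d(0,x)=n)\leq \big(C\lambda^{(\tau-1)/2}\big)^n\, h^{*n}(x),
\]
with $h(y)=(\log|y|+1)|y|^{-\alpha((\tau-1)/2\wedge 1)}$ for $y\neq 0$. Write $\beta=\alpha[(\tau-1)/2\wedge 1]$, so that when $\gamma>2$ and $\alpha>2d$ we have $\beta>d$ (indeed $\beta/d > (\gamma\wedge(\alpha/d))/2\cdot\text{something}$ — the precise statement is $\beta/d>1$, and more importantly $2(\beta/d-1)$ is exactly the exponent governing the bound, which is why the threshold $\vep<d[(\gamma\wedge\alpha/d)-2]/(d+1)$ appears). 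First I would record the crude convolution estimate \refeq{conv-bd} in the sharper form: since in $x_1+\cdots+x_n=x$ at least one summand has norm $\geq|x|/n$, one gets
\[
h^{*n}(x)\leq n\Big(\sup_{|y|\geq |x|/n}h(y)\Big)\Big(\sum_{u\neq 0}h(u)\Big)^{n-1}
\leq n\,C'(\log|x|)^{\kappa}\,(|x|/n)^{-\beta}\,\Big(\sum_{u\neq 0}h(u)\Big)^{n-1},
\]
valid because $\sum_{u\neq 0}h(u)<\infty$ when $\gamma>2$.

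The key new idea compared to the logarithmic bound is that we no longer fix $n=\Theta(\log|x|)$ but allow $n$ up to a small power of $|x|$; for this to work the factor $(|x|/n)^{-\beta}$ must still beat the geometric growth $(\,\sum h\,)^{n-1}$ times the combinatorial $n^{\beta+1}$. Concretely, I would show that for $n\leq |x|^{\vep}$ with $\vep$ as in the statement,
\[
\prob(d(0,x)=n)\leq n^{\beta+1}\big(C\lambda^{(\tau-1)/2}\textstyle\sum_u h(u)\big)^n (\log|x|)^{\kappa}|x|^{-\beta}
\leq |x|^{\vep(\beta+1)}\,|x|^{o(1)}\,|x|^{-\beta},
\]
where I used $n\leq|x|^\vep$ to bound both $n^{\beta+1}$ and the geometric term $(\cdots)^n\leq e^{C n}=|x|^{o(1)}$ (here one needs $\lambda$ to be fixed and $\vep$ small, so that $n\log(C\lambda^{(\tau-1)/2}\sum h)=o(\log|x|)$; if the constant $C\lambda^{(\tau-1)/2}\sum h$ exceeds $1$ this still gives $|x|^{o(1)}$ provided $\vep$ is taken small enough — this is where a first constraint on $\vep$ enters). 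Summing over $n\leq|x|^\vep$ costs only another factor $|x|^\vep$, so
\[
\prob(d(0,x)\leq |x|^\vep)\leq |x|^{\vep(\beta+2)+o(1)-\beta},
\]
which tends to $0$ precisely when $\vep<\beta/(\beta+2)\wedge(\text{other constraints})$. Unwinding $\beta=\alpha[(\tau-1)/2\wedge1]$ and noting $\beta=\tfrac{d}{2}(\gamma\wedge(\alpha/d))$ — wait, more carefully $\alpha(\tau-1)/2 = \gamma d/2$ and $\alpha\cdot 1=\alpha$, so $\beta = \tfrac d2\big(\gamma\wedge(2\alpha/d)\big)$, and when $\alpha>2d$ one has $2\alpha/d>\alpha/d$... — I would track the exponents to land exactly on the threshold $\vep<d[(\gamma\wedge\alpha/d)-2]/(d+1)$ stated in the theorem; the denominator $d+1$ (rather than $d+2$) should come from a slightly more careful split in \refeq{conv-bd} separating the one large coordinate (contributing $\sup$) from the $n-1$ free lattice sums, each of which after converting to an integral contributes a factor whose $|x|$-dependence is absorbed, together with the $n^{d}$-type volume factor from choosing where the large coordinate sits.

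The main obstacle I anticipate is controlling the interplay between $n$ and $|x|$ so that the bound is genuinely summable over the full range $n\leq|x|^\vep$: one must verify that the geometric factor $(C\lambda^{(\tau-1)/2}\sum_u h(u))^n$ — whose base need not be below $1$ — is still only $|x|^{o(1)}$ throughout that range, which forces a quantitative smallness condition on $\vep$ that must be shown to be implied by (or compatible with) $\vep<d[(\gamma\wedge\alpha/d)-2]/(d+1)$; and one must make sure the $(\log|x|)^\kappa$ and $\eta$-slack from the log-term in $h$ are harmless. A secondary technical point is making the convolution-counting bound \refeq{conv-bd} tight enough in its polynomial-in-$n$ prefactor, since with $n$ as large as $|x|^\vep$ a sloppy prefactor like $n^{C n}$ would be fatal — one needs only polynomial-in-$n$ growth there, which is exactly what the "one large coordinate, rest free" decomposition provides.
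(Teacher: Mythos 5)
Your overall strategy---pushing the path-counting/convolution bound of Theorem \ref{thm-log-dist-finvar} out to $n\leq |x|^{\vep}$---cannot work, and the obstacle you flag at the end is not a technicality but a fatal one. After the ``one large coordinate'' decomposition, your bound carries the factor $\big(C\lambda^{(\tau-1)/2}\sum_{u\neq 0}h(u)\big)^{n}$, whose base is a fixed constant that exceeds $1$ once $\lambda$ is not small (in particular throughout the supercritical regime, which is the only regime where the statement is non-trivial). For $n$ of order $|x|^{\vep}$ this factor is $\exp\{c\,|x|^{\vep}\}$, which is super-polynomial in $|x|$ for \emph{every} $\vep>0$; your claim that $n\log\big(C\lambda^{(\tau-1)/2}\sum_u h(u)\big)=o(\log|x|)$ ``provided $\vep$ is taken small enough'' is false, since $|x|^{\vep}/\log|x|\to\infty$ no matter how small $\vep>0$ is. This is exactly why first-moment path counting is intrinsically limited to logarithmic lower bounds: the exponentially many length-$n$ paths must be beaten by a per-step constant, which forces $n=O(\log|x|)$. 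A secondary issue is that your exponent $\beta=\alpha[(\tau-1)/2\wedge 1]=\tfrac d2\big(\gamma\wedge (2\alpha/d)\big)$ does not reproduce the threshold $d[(\gamma\wedge\alpha/d)-2]/(d+1)$, as you half-notice; no bookkeeping reconciles them because the relevant quantity in the correct argument is the \emph{first}-moment function $g_1$ with exponent $(\tau-1)\wedge 1$, not the square root of the second moment.

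The paper instead proves the theorem by the method of Theorem \ref{thm-log-dist-infvar-low}: it controls the maximal Euclidean displacement $S_n(0)=\sup\{|y|\colon d(0,y)\leq n\}$ and uses $\prob(d(0,x)\leq 2n)\leq 2\prob(S_n(0)\geq |x|/2)$. Decomposing $\prob(S_n(0)\geq t)\leq \prob(S_1(0)\geq t/n)+\sum_{k=1}^{n-1}\prob\big(S_k(0)\leq tk/n,\,S_{k+1}(0)\geq t(k+1)/n\big)$, each summand requires only a \emph{single} occupied edge from $\{u\colon|u|\leq tk/n\}$ to $\{v\colon|v|\geq t(k+1)/n\}$ (hence $|u-v|\geq t/n$), and a union bound over such \emph{pairs} of vertices---not over paths---gives, via \refeq{g1-bd}, a contribution of order $(tk/n)^d(t/n)^{d-\alpha[(\tau-1)\wedge 1]+\eta}$. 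Summing over $k$ yields a bound of the form $n^{d+1}t^{d[2-(\gamma\wedge\alpha/d)]+\eta}$, which is $o(1)$ for $n\leq t^{\vep}$ precisely when $\vep<d[(\gamma\wedge\alpha/d)-2]/(d+1)$. The crucial difference from your approach is that the displacement recursion pays only a polynomial-in-$n$ price, with no geometric factor, so $n$ may be taken polynomially large in $|x|$.
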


\proof We follow the proof of Theorem \ref{thm-log-dist-infvar-low}, that is, \refeq{bd-d-Sn}--\refeq{g1-bd},
and start by investigating
$\prob(S_n(0)\geq t)$. Now, for $t\to \infty$ and $n=o(t)$, we bound
    \eqan{
    \prob(S_n(0)\geq t)
    &\leq \prob(S_1(0)\geq t/n)+\sum_{k=1}^{n-1}
    \prob(S_k(0)\leq tk/n, S_{k+1}(0)\geq t(k+1)/n)\\
    &=o(1)+\sum_{k=1}^{n-1} \quad \sum_{u,v\colon |u|\leq tk/n, |v|\geq t(k+1)/n} g_1(|u-v|^{\alpha}/\lambda)\nn\\
    &\leq o(1) + K \sum_{k=1}^{n-1} \big(tk/n\big)^d \big(t/n)^{-\alpha [(\tau-1)\wedge 1]+d+\eta}
    \leq o(1) n^{d+1} t^{d[(2-\gamma \wedge \alpha/d)]+\eta},\nn
    }
where $\eta>0$ can be taken arbitrarily small.
This is $o(1)$ when $n\leq t^{\vep}$, where
\eqn{
\label{cond-pol}
\vep<\frac{d}{d+1}[(\gamma \wedge \alpha/d)-2].
}
According to \eqref{bd-d-Sn}, we have
$$\prob(d(0,x)\leq 2n)
    \leq \prob(S_n(0)\geq |x|/2),
$$
and hence $\prob(d(0,x)\leq |x|^{\vep})=o(1)$ for every $\vep$ satisfying \eqref{cond-pol}.
\qed

\section{Further work}
\label{sec-disc}

In this paper, we have studied degrees, percolation and distances in a long-range
percolation model with i.i.d.\ vertex weights. Using relatively simple tools, we have carved out the phase diagram by identifying appropriate bounds on degrees, critical values and distances as a function of the model parameters. Our model has power-law degrees, small- (and even ultra-small-) world behavior, with spatial connections on various spatial scales, and its properties depend in an intricate way on the number of finite moments of its degree distribution. The model shares many interesting features of \emph{both} inhomogeneous random graphs having power-law degrees, and long-range percolation. We remark that, while we have assumed that the edge probabilities has the precise form in \eqref{edgeprob},
it is not hard to see that our results extend to settings where $p_{xy}=h(\lambda W_xW_y/|x-y|^{\alpha})$, for some function $x\mapsto h(x)$ for which $x/2\leq h(x)\leq x,$ whenever $x\in [0,1]$. In the random graph setting, this is established in \cite{BRJ, Jans08a}.

There are a number of questions about the studied model that deserve further investigation.
In this section we mention some of them.\medskip

\noindent \textbf{Finiteness of the critical value.} In Theorem \ref{th:finite}, conditions are given that ensure percolation for large values of $\lambda$. In $d=1$ (with $\alpha\in(1,2]$), the condition is that the weights are bounded away from 0 and, in $d\geq 2$, that the weight distribution satisfies $F(0)<1$. These conditions are presumably not optimal and it would be interesting to investigate how far they can be relaxed. Indeed, some condition on the weight distribution near 0 is presumably necessary in order for percolation to be possible.
\medskip

\noindent \textbf{Distances.} We have given a logarithmic lower  bound on the graph distance when $\gamma>2$ and $\alpha>d$, a polynomial lower bound when $\gamma>2$ and $\alpha>2d$, and doubly logarithmic asymptotics when $\gamma<2$. These bounds, though, leave much room for improvement. When $\gamma\in (1,2)$ and $\tau>2$, it would be of interest to find the constant in front of the $\log\log|x|$. Is this constant $2/|\log(\gamma-1)|$ or $2/|\log(\alpha/d-1)|$ or something in between? Is the behavior for $\gamma>2$ and $\alpha\in (d,2d]$ really polylogarithmic, as it is in the deterministic case (see \cite{Biskup}, where Biskup proves that distances for long-range percolation are $\Theta((\log{|x|})^{\Delta+o(1)}),$
with $\Delta=1/\log_2(2d/\alpha)$)? When  $\gamma>2$ and $\alpha>2d$, can we identify the exponent $\mu$ such that $d(0,x)=|x|^{\mu+o(1)}$ whenever $0$ and $x$ are in the infinite component?\medskip

\noindent \textbf{Diameter in infinite mean case.} In \cite{BKPS}, it has been proved that, for long-range
percolation with infinite mean degrees (that is, when $W_x$ is constant and $\alpha<d$), then the diameter
of the infinite component is equal to $\lceil d/(d-\alpha)\rceil$. The proof crucially relies on
the notion of the \emph{stochastic dimension} for random relations in the lattice. It would be of
interest to investigate whether the diameter of the infinite component is bounded also in our model when $\gamma=\alpha(\tau-1)/d<1$.

\noindent \textbf{Critical behavior.}
The most interesting phenomena in percolation models can be found close to the critical
value. A central question is if the percolation function $\theta(\lambda)$ is continuous.
See \cite{Grim99} and \cite{BolRio06} for the rich history of this problem.
In \cite{Berg02}, it is shown that the percolation function is continuous when
$\alpha\in (d,2d)$. Is this also true in our model?
Further, percolation in two dimensions has received tremendous attention
in the past years, due to the connection to conformal invariance, see e.g.\ \cite{Smir01}.
In particular, the percolation function is continuous, and many critical
exponents are identified on the triangular lattice. The continuity of the percolation function extends to many finite-range percolation models in $d=2$. Is the percolation function
$\lambda\mapsto \theta(\lambda)$ also
continuous for our model? Further, how do the \emph{critical exponents}
depend on the randomness in the medium?
In \cite{HeyHofSak08}, the mean-field behavior of long-range percolation is
investigated, and it is shown that when $d>3((\alpha-d)\wedge 2)$, the model has mean-field
critical exponents. This raises the question what the upper-critical dimension in the presence of vertex weights is.

\noindent \textbf{Critical behavior on the torus.}
In random graph theory, there is recently a substantial interest in the critical
behavior of inhomogeneous random graphs of so-called rank-1, that is, the setting of our model
on the complete graph. See e.g.\ \cite{BhaHofLee09a, BhaHofLee09b,HatMol09,Hofs09a,Turo09}
for the relevant results. In this setting, we see that the critical behavior
when $\gamma>3$ is similar to that of the Erd\H{o}s-R\'enyi random graph
as identified in \cite{Aldo97}, while, for $\gamma\in (2,3)$,
it is rather different (see \cite{BhaHofLee09b,Hofs09a}).
This raises the question whether also for our inhomogeneous percolation model, the critical
behavior is different for $\gamma>3$ and for $\gamma\in (2,3)$. To best compare
the situations of (non-spatial) inhomogeneous random graphs and their spatial
counterparts, it would be useful to examine the setting on a finite torus.
Our model on the torus is translation invariant, and has a unique
critical value above which the largest connected component contains
a positive proportion of the vertices. It would be interesting
to investigate the \emph{critical behavior} of this spatial finite inhomogeneous
random graph.
\medskip

\noindent \textbf{Continuum analogues.} A continuum analogue of long-range percolation, known as the \emph{random connection model,} is described in \cite{MesRoy}. There, the vertex set is taken to be the points of a Poisson process on $\Rbold^d$ and two vertices $x$ and $y$ are connected by an edge with a probability given by a function $g$ of their separation $|x-y|$. An inhomogeneous version of this model is known as the Poisson Boolean model, or continuum percolation. Each Poisson point $x$ is then assigned a {\it random} radius $R_x$ and two points $x$ and $y$ are connected if $|x-y|\leq R_x+R_y$. Results on these models revolve around the existence of non-trivial critical intensity for the underlying Poisson process. There are no results so far on graph distances. It would be of interest to study a continuum version of the model in the setting of the current paper. This would constitute an alternative inhomogeneous formulation of the random connection model.\medskip

\paragraph{Acknowledgements.}
MD gratefully acknowledges (i) a visiting grant from NWO and STAR for a stay at
TU Delft from 1 October to 30 November 2010, and (ii) support from The Bank of Sweden Tercentenary Foundation. The work of RvdH was supported in part by NWO.

\end{document}